\documentclass{amsart}
\usepackage[]{latexsym,amssymb,amsmath,amsfonts, amsthm}
\usepackage[all,cmtip,ps]{xy}

\def\A{\mathcal{A}}

\def\C{\mathcal{C}}
\def\T{\mathcal{T}}
\def\F{\mathcal{F}}
\def\X{\mathcal{X}}
\def\Y{\mathcal{Y}}
\def\D{\mathcal{D}}

\def\Db{\mathcal{D}^{b}}
\def\H{\mathcal{H}}

\def\End{\operatorname{End}}
\def\Im{\operatorname{Im}}
\def\Hom{\operatorname{Hom}}
\def\Ext{\operatorname{Ext}}
\def\Tor{\operatorname{Tor}}

\def\pdim{\operatorname{proj\,dim}}

\def\dualita#1#2{\mathrel{
                 \mathop{\vcenter{
                 \offinterlineskip
                 \hbox to 0.6truecm{\rightarrowfill}%\kern ...
                 \hbox to 0.6truecm{\leftarrowfill}}}%
                 \limits_{#2}^{#1}}}
\DeclareMathOperator{\Add}{Add}
\DeclareMathOperator{\add}{add}
\DeclareMathOperator{\Prod}{Prod}

\DeclareMathOperator{\Gen}{Gen}
\DeclareMathOperator{\Pres}{Pres}
\DeclareMathOperator{\Ann}{Ann}

\DeclareMathOperator{\Ker}{Ker}
\DeclareMathOperator{\Rej}{Rej}
\DeclareMathOperator{\cone}{cone}

\DeclareMathOperator{\Coker}{Coker}
% ******************** DECLARATION STYLE ******************%

\newtheorem{theorem}{Theorem}[section]

\newtheorem{corollary}[theorem]{Corollary}

\newtheorem{definition}[theorem]{Definition}
\newtheorem{example}[theorem]{Example}

\newtheorem{lemma}[theorem]{Lemma}

\newtheorem{proposition}[theorem]{Proposition}
\theoremstyle{remark}
\newtheorem{remark}[theorem]{Remark}

\newcommand*{\rMod}{\textrm{\textup{Mod-}}}

%\pagenumbering{roman}
\begin{document}

\title[Abelian categories with a tilting object]{When an abelian category with a tilting object is a module category}
\author{Riccardo Colpi, Francesca Mantese, Alberto Tonolo}
\address[R. Colpi]{ Dip. Matematica Pura ed Applicata, Universit\`a degli studi di Padova, via Trieste 63, I-35121 Padova Italy}
\email{colpi@math.unipd.it}
\address[F. Mantese]{Dipartimento di Informatica, Universit\`a degli Studi di Verona, strada Le Grazie  15, I-37134 Verona - Italy}
\email{francesca.mantese@univr.it}
\address[A. Tonolo]{ Dip. Matematica Pura ed Applicata, Universit\`a degli studi di Padova, via Trieste 63, I-35121 Padova Italy}
\email{tonolo@math.unipd.it}
%\thanks{\emph{to the memory of Silvia Lucido}}
\thanks{Research supported by grant CPDA071244/07 of Padova University\\
2000 Mathematics Subject Classification: 18E10, 18E40, 16D90}
%\dedicatory{to the memory of our friend and colleague Silvia Lucido}

\date{\today}
\maketitle

\begin{abstract}
An abelian category with arbitrary coproducts and a small projective generator is equivalent to a module category \cite{Mit}. A tilting object in a abelian category is a natural generalization of   a small projective generator. Moreover, any abelian category with a tilting object admits arbitrary coproducts \cite{CGM}. It naturally arises the question when an abelian category with a tilting object is equivalent to a module category. By \cite{CGM} the problem simplifies in understanding when, given  an associative ring $R$ and  a faithful torsion pair $(\X,\Y)$  in the category of right $R$-modules,  the \emph{heart of the $t$-structure}   $\H(\X,\Y)$ associated to $(\X,\Y)$  is equivalent to  a category of modules. In this paper we give a complete answer to this question,  proving necessary and sufficient condition on $(\X,\Y)$ for $\H(\X,\Y)$  to be equivalent to a module category. We analyze in detail the case  when $R$ is right artinian.
\end{abstract}
%\begin{abstract}
%Let $R$ be an associative ring, and $\rMod R$ the category af all right $R$-modules. For a torsion pair $(\X,\Y)$ in $\rMod R$, the associated {\it heart} $\H(\X,\Y)$ is defined as a suitable subcategory of the derived category of $\rMod R$, which turns out to be abelian (see \cite{BBD}). In \cite{CGM} $\H(\X,\Y)$  is proved to be an AB4 category.
%This naturally rises the question when $\H(\X,\Y)$ is equivalent to a whole module category.  A necessary condition is that $\X$ is generated by a \emph{quasi tilting module} $P$ (see \cite{CbF} and \cite{CDT}). In this paper we give necessary and sufficient conditions on $P$ for $\H(\X,\Y)$ is equivalent to a module category. We analyze in detail the cases  when $R$ is right artinian.
%\end{abstract}

\section*{Introduction}

In 1964 Barry Mitchell characterized the module categories as those abelian categories with arbitrary coproducts possessing a small and projective generator \cite{Mit}. At the beginning of the eighties, with the papers of Brenner and Butler, Happel and Ringel, Bongartz and others, the notion of tilting module has been introduced and extensively studied. Tilting modules are small and projective exactly in the subcategory generated by them: they naturally generalize small projective generators.

Tilting theory has been object of further generalizations in the direction of abstract categories, like the case of derived categories \cite{H}, \cite{R}, Grothendieck categories \cite{C} and abelian categories \cite{HRS}, \cite{CF}. In particular in \cite{CF} tilting objects for an arbitrary abelian category are defined. Any abelian category with a tilting object admits arbitrary coproducts \cite{CGM}.

Thus, it naturally arises the question when an abelian category with a tilting object is equivalent to a module category. The aim of this paper is to give necessary and sufficient conditions to guarantee such an equivalence.

In \cite{BBD} Beilinson, Bernstein and Deligne introduced the \emph{heart of a $t$-structure} in a derived category, proving that it is always an abelian category.
In \cite{CGM} Colpi, Gregorio and Mantese showed that an abelian category with a tilting object $V$ is equivalent to the heart $\H(\X,\Y)$ of the $t$-structure in $\D^b(\End V)$ naturally associated with a suitable faithful torsion pair $(\X,\Y)$ in the category of right $\End V$-modules. 
In the light of this result, our question simplifies in understanding when, given an arbitrary associative ring $R$,  the heart of the $t$-structure naturally associated with a faithful torsion pair $(\X,\Y)$ in $\rMod R$ is equivalent to a module category.
This is not always true: for instance the heart associated with the usual torsion pair in the category $\rMod \mathbb{Z}$ of abelian groups is not equivalent to any module category (see Example~\ref{example:abgroups}).

Many papers deal with the problem of understanding when the heart of a $t$-structure is equivalent to a module category in different frameworks (see for example \cite{BeRe}, \cite{HRS}, \cite{AlN}, \cite{HKM}, \cite{CGM}, ...). In all these papers, in different ways, a ``tilting notion'' is always involved.

For example a wide description of the heart associated with a torsion pair is given by Happel, Reiten and Smal$\o$ in \cite{HRS}. In particular they prove, in case of locally finite abelian categories, that if the torsion class is \emph{cogenerating}, i.e., it contains all injective modules, then the heart is equivalent to a module category if and only if the torsion class is generated by a tilting module.

Dually, in our setting, we deal with a faithful torsion pair, that is a torsion pair whose torsion free class contains all projective modules.  Following \cite{CGM} or \cite{HKM} the heart associated with a faithful torsion pair is equivalent to a module category if and only if it is the heart of a $t$-structure generated by a tilting complex. Unfortunately this condition is not easily verifiable. In this paper we want to find an explicit characterization in terms of the torsion pair we start from, as Happel, Reiten and Smal$\o$ did in \cite{HRS} in the case the torsion class is cogenerating.

In our main result Theorem~\ref{thm:main} we give necessary and sufficient conditions on a faithful torsion pair $(\X,\Y)$ in $\rMod R$ for $\H(\X,\Y)$ to be equivalent to a category of modules. In particular if $R$ is artinian, we prove that $\H(\X,\Y)$ is  equivalent to a category of modules if and only the torsion class $\X$ is generated by a finitely presented $R$-module $V$ which is tilting in $\rMod {R/\!\Ann(V)}$ (see Corollary~\ref{cor:artin}).

In the last section we give concrete examples of how our results apply both in the artinian and not artinian cases. Moreover we give a new proof of the fact that a quasi tilted algebra of finite representation type is tilted, originally proved by Happel and Reiten in \cite{HR}.

\section*{Notation}
Let $\C$ be an abelian category and $V$ an object of $\C$. It is possible to associate with $V$ several classes of objects:
\begin{itemize}
\item $\Gen V=\{M\in\C: V^{(\alpha)}\to M \to 0 \text{ is exact in }\C\text{ for a cardinal }\alpha\}$;
\item $\overline{\Gen V}$
 is the closure of $\Gen V$ under subobjects;
 %: it is the smallest exact abelian subcategory of $\C$ containing $\Gen V$;
\item $\overline{\overline{\Gen V}}$ is the class of objects in $\C$ which admit a finite filtration with consecutive factors in
$ \overline{\Gen V}$;
\item $V^\perp=\Ker\Ext^1_\C(V,-)$.
\end{itemize}

Following Dickson \cite{Di}, a \emph{torsion theory} for $\C$ is a pair $(\T,\F)$ of classes of objects of $\C$ satisfying 
\begin{enumerate}
\item $\T\cap\F=\{0\}$, 
\item $T\rightarrow A\rightarrow 0$ exact and $T\in\T$ imply $A\in\T$; 
\item $0\rightarrow A\rightarrow F$ exact and $F\in\F$ imply $A\in\F$;
\item for each $X\in\C$ there is an exact sequence $0\rightarrow T\rightarrow X\rightarrow F\rightarrow 0$ with $T\in\T$, $F\in\F$.
\end{enumerate}
In such a case $\T$ is a \emph{torsion class} and $\F$ is a \emph{torsion-free class}.

If $R$ is an associative ring with $1\not=0$ and $M$ a right $R$-module, we will denote by $R_M$ the quotient ring $R/\!\Ann_R(M)$.

\section{The heart}

Given any associative ring $R$, let $\Db(R)$ be the bounded derived category of $\rMod R$. For any complex $M^{\bullet}\in \Db(R)$ we denote by $H^{i}(M^{\bullet})$ the $i$-th cohomology  of  $M^{\bullet}$.  If $(\X, \Y)$ is a torsion pair in $\rMod R$, then we denote by $\H(\X, \Y)$ the full subcategory of
$\Db(R)$ defined as
\begin{equation*}
\H(\X, \Y) = \{M^\bullet \in \Db(R) \mid H^{-1}(M^\bullet)\in\Y,\ H^{0}(M^\bullet)\in\X,\ H^{i}(M^\bullet)=0
\ \forall i\neq -1,0\}.
\end{equation*}
$\H(\X, \Y)$ is called the \emph{heart} associated with $(\X, \Y)$; it is the heart of the $t$-structure $(\D^{\leq 0},\D^{\geq 0})$ in $\Db(R)$ where
\[\D^{\leq 0}=\{A^\bullet\in\Db(R): H^i(A^\bullet)=0\text{ for }i>0, H^0(A^\bullet)\in\X\}\text{ and}\]
\[
\D^{\geq 0}=\{A^\bullet\in\Db(R): H^i(A^\bullet)=0\text{ for }i<-1, H^{-1}(A^\bullet)\in\Y\}
\]
In 1982 Beilinson, Bernstein and Deligne \cite{BBD} proved that the heart is an abelian category.
\begin{remark}\label{rem:ext}
Following \cite{BBD}, a sequence $0\to A^\bullet\to B^\bullet\to C^\bullet\to 0$ in $\H(\X, \Y)$ is short exact if and only if $A^\bullet\to B^\bullet\to C^\bullet\to A^\bullet[1]$ is a triangle in $ \Db(R)$. In particular it follows that given $A^\bullet$, $C^\bullet$ in  $\H(\X, \Y)$ the Yoneda $\Ext^i_{\H(\X, \Y)}(C^\bullet, A^\bullet)$ coincides with
$\Hom_{\Db(R)}(C^\bullet, A^\bullet[i])$.
\end{remark}

In the sequel, we shortly denote by $M_1\to M_0$ the complex \[\dots\to  0\to M_1\to M_0\to 0\to \dots\] with zero terms everywhere except in degree $-1$ e $0$.
If $(\X, \Y)$ is a \emph{faithful} torsion pair in $\rMod R$, i.e. $R$ belongs to $\Y$, then each $M^\bullet$ in $\H(\X,\Y)$ is isomorphic to a complex $Y_1\to Y_0$ with terms in $\Y$, which is obtained considering a truncation of a projective resolution of $M^\bullet$. 
In particular we have
\begin{lemma}\label{lemma:representing_objects_in_H}
Let $(\X,\Y)$ be a faithful torsion pair in $\rMod R$ and let $Y_{1},Y_0\in\Y$.
An object $M^\bullet$ of $\H(\X,\Y)$ is isomorphic to the complex $Y_1\stackrel{\phi}{ \to} Y_0$  if and only if
the sequence $0 \to M^\bullet \to Y_1[1] \stackrel{\phi[1]}\to Y_0[1] \to 0$ is exact in $\H(\X,\Y)$.
\end{lemma}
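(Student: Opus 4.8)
The plan is to identify the two-term complex $Y_1\stackrel{\phi}{\to}Y_0$ with the mapping cone of $\phi$, to rotate and shift the canonical triangle attached to that cone so that $\phi[1]$ appears as the middle morphism, and then to read the asserted short exact sequence off via Remark~\ref{rem:ext}.

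First I would record the easy preliminary facts. Since $Y_0,Y_1\in\Y$, the stalk complexes $Y_0[1]$ and $Y_1[1]$ have their only nonzero cohomology, namely $Y_0$ and $Y_1$, in degree $-1$, so both belong to $\H(\X,\Y)$; moreover the morphism $\phi$ of $\rMod R$ gives rise to $\phi[1]\colon Y_1[1]\to Y_0[1]$ in $\Db(R)$. Unwinding the definition of the mapping cone shows that, when $\phi$ is regarded as a chain map between the stalk complexes $Y_1,Y_0$ sitting in degree $0$, one has $\cone(\phi)=(Y_1\stackrel{\phi}{\to}Y_0)$. Shifting the canonical triangle $Y_1\stackrel{\phi}{\to}Y_0\to\cone(\phi)\to Y_1[1]$ by $[1]$ and rotating it then yields a distinguished triangle
\[
(Y_1\stackrel{\phi}{\to}Y_0)\longrightarrow Y_1[1]\stackrel{\phi[1]}{\longrightarrow}Y_0[1]\longrightarrow(Y_1\stackrel{\phi}{\to}Y_0)[1]
\]
in $\Db(R)$ whose middle morphism is exactly $\phi[1]$.

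With this triangle in hand both implications are short. For the ``only if'' part, given an isomorphism $M^\bullet\cong(Y_1\stackrel{\phi}{\to}Y_0)$ in $\Db(R)$, I would substitute it into the displayed triangle, obtaining a distinguished triangle $M^\bullet\to Y_1[1]\stackrel{\phi[1]}{\to}Y_0[1]\to M^\bullet[1]$ all of whose vertices lie in $\H(\X,\Y)$ ($M^\bullet$ by hypothesis, the other two by the first step); by Remark~\ref{rem:ext} this is precisely the statement that $0\to M^\bullet\to Y_1[1]\stackrel{\phi[1]}{\to}Y_0[1]\to 0$ is short exact in $\H(\X,\Y)$. For the ``if'' part, Remark~\ref{rem:ext} turns the given short exact sequence into a distinguished triangle $M^\bullet\to Y_1[1]\stackrel{\phi[1]}{\to}Y_0[1]\to M^\bullet[1]$; rotating it and the displayed triangle so that $\phi[1]$ becomes the first morphism, both exhibit their last term as a cone of $\phi[1]$, whence $M^\bullet[1]\cong(Y_1\stackrel{\phi}{\to}Y_0)[1]$ and therefore $M^\bullet\cong Y_1\stackrel{\phi}{\to}Y_0$.

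I do not expect a genuine obstacle: the argument is formal once the cone is correctly computed. The only points requiring a little care are the sign and rotation bookkeeping needed to make $\phi[1]$ appear as the actual middle morphism of the cone triangle (not merely a morphism isomorphic to it), and the observation that a cone in a triangulated category is determined only up to a non-canonical isomorphism — which is all that the statement, asking for an isomorphism of objects, demands.
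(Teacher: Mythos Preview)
Your argument is correct and is essentially the paper's own proof: both reduce the statement via Remark~\ref{rem:ext} to the identification $M^\bullet\cong\cone(\phi[1])[-1]=Y_1\stackrel{\phi}{\to}Y_0$, obtained by rotating the mapping-cone triangle for $\phi$ (the paper writes this in one line, while you spell out the rotation/shift bookkeeping and the preliminary check that $Y_0[1],Y_1[1]\in\H(\X,\Y)$).
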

\begin{proof}
By Remark~\ref{rem:ext},  the sequence
\[0 \to M^\bullet \to Y_1[1] \stackrel{\phi[1]}\to Y_0[1] \to 0\]
is exact in $\H(\X,\Y)$ if and only if
\[M^\bullet \to Y_1[1] \stackrel{\phi[1]}\to Y_0[1] \to M^\bullet[1]\]
is a triangle in $\Db(R)$, i.e. if and only if 
\[M^\bullet\cong \cone(\phi[1])[-1]=Y_1\stackrel{\phi}{ \to} Y_0.\]
\end{proof}

In \cite{No} Noohi has given a useful explicit description of morphisms in $\H(\X,\Y)$. Given two objects $M^{\bullet}:=M_1\to M_0$ and $N^{\bullet}:=N_1\to N_0$ in $\H(\X,\Y)$, a morphism between $M^{\bullet}$ and $N^{\bullet}$ is a isomorphism class of commutative diagrams
\[
\xymatrix@-1pc{
M_1\ar[dd]_d\ar[dr]^k&&N_1\ar[dd]^d\ar[dl]_\iota\\
&E\ar[dl]^\sigma\ar[dr]_\rho\\
M_0&&N_0
}\] 
such that the diagonal maps compose to zero and the sequence 
\[0\to N_1\stackrel{\iota}\to E\stackrel{\sigma}\to M_0\to 0\]
is exact. The kernel and cokernel of this morphism are given by the complexes
\[M_1\stackrel k\to A\quad\text{and}\quad E/A \stackrel \rho\to N_0\]
where $A$ is the unique submodule of $E$ sitting between $\Im k$ and $\Ker\rho$ such that $A/\Im k\in\X$ and $\Ker\rho/A\in\Y$.

\section{The problem}

Happel, Reiten and Smal$\o$ in \cite{HRS} have introduced the notion of tilting object in locally finite abelian categories, generalizing that of \emph{classical 1-tilting} module, i.e., a finitely generated tilting module of projective dimension $\leq 1$. 
 Colpi and Fuller in \cite{CF} have further generalized this notion for an arbitrary abelian category:
\begin{definition}\cite[Definition~2.3]{CF}\label{def:tilting}
An object $V$ in an abelian category $\C$ is called \emph{tilting} if:
\begin{enumerate}
\item $\C$ contains arbitrary coproducts of copies of $V$;
\item $V$ is selfsmall (i.e., $\Hom_\C(V,V^{(\alpha)})\cong (\End V)^{(\alpha)}$ for any cardinal $\alpha$);
\item $\Gen V=V^\perp$;
\item $\overline\Gen V=\C$.
\end{enumerate}
\end{definition}
By \cite[Proposition 2.1]{CF} a tilting object has projective dimension $\leq 1$.

Possessing a tilting object is a very tightening condition for an abelian category. In particular it is an AB4 category, i.e. it has arbitrary and exact coproducts (see \cite[Lemma~3.2]{CGM}). On the other hand, an object $V$ in an AB4 category  is tilting if and only if it satisfies conditions (2), (3) in Definition~\ref{def:tilting} (see \cite[Remark 2.2]{CF}). 

A tilting object $V$ in an abelian category $\C$ generates a torsion pair $(\T=\Gen V,\F)$ which is counter equivalent (see \cite{CbF} and \cite{CF}) to a faithful torsion pair $(\X,\Y)$ in $\rMod\!\End V$, called the \emph{tilted torsion pair} of $(\T,\F)$. Precisely the torsion class $\X$ coincides with the image in $\rMod\! \End V$ of the functor $\Ext^1_{\C}(V,-)$ and the torsion-free class $\Y$ coincides with the image in $\rMod\!\End V$ of the functor $\Hom_{\C}(V,-)$.
\begin{example}\label{ex:cuore}
If $(\X, \Y)$ is a faithful torsion pair in $\rMod R$, the complex $R[1]$ is a tilting object in the abelian category $\H(\X,\Y)$. It is $\Gen_{\H(\X,\Y)} R[1]=\Y[1]$ and $(\Y[1],\X)$ is a torsion pair in $\H(\X,\Y)$ naturally counter equivalent to the torsion pair $(\X,\Y)$ in $\rMod\!\End R[1]=\rMod R$.
The counter equivalence between 
$(\Y[1], \X)$ and $(\X,\Y)$ is given by the functors
\[H:=\Hom_{\H}(R[1], -):\H(\X,\Y)\to \rMod R,\text{ and}\]
\[ H':=\Ext_{\H}^1(R[1], -):\H(\X,\Y)\to \rMod R\]
and by their adjoint functors $T$ and $T'$. 
For any complex $M^\bullet$ in $\H(\X,\Y)$ and any module $N$ in $\rMod R$, denoted by $t_\X$ the radical of the torsion pair $(\X,\Y)$, it is
\[H(M^\bullet)=H^{-1}(M^\bullet), \ 
H'(M^\bullet)=H^{0}(M^\bullet), \ 
T(N)=\left(N/t_\X(N)\right)[1], \ 
T'(N)=t_\X(N)[0].\]
\end{example}

The last is much more than an example. Indeed, Colpi, Gregorio and Mantese proved 
\begin{theorem}\cite[Corollary~2.4]{CGM}
Let $\C$ be an abelian category with a tilting object $V$, and $(\T,\F)$ the torsion pair generated by $V$. Then the category $\C$ is equivalent to the heart associated with the tilted torsion pair $(\X,\Y)$ of $(\T,\F)$ in $\rMod\!\End V$.
Moreover $(\End V)[1]$ is the tilting object in $\H(\X,\Y)$ corresponding  to $V$ by the equivalence.
\end{theorem}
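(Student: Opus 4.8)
The plan is to prove the theorem by exhibiting an explicit equivalence $\C\simeq\H(\X,\Y)$ and verifying it carries $V$ to $(\End V)[1]$. Throughout write $R=\End V$ and let $(\T,\F)=(\Gen V,\F)$ be the torsion pair generated by $V$ in $\C$, with tilted torsion pair $(\X,\Y)$ in $\rMod R$, where $\X=\Im\Ext^1_\C(V,-)$ and $\Y=\Im\Hom_\C(V,-)$ (as recalled just before Example~\ref{ex:cuore}). Since $V$ is tilting, $\C$ is an AB4 category (\cite[Lemma~3.2]{CGM}), $V$ has projective dimension $\le 1$ (\cite[Proposition~2.1]{CF}), and $\Gen V=V^\perp$, $\overline{\Gen V}=\C$; these are exactly the hypotheses under which the counter-equivalence machinery of \cite{CbF},\cite{CF} applies, giving adjoint pairs relating $(\T,\F)$ and $(\X,\Y)$.

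First I would set up the candidate functors. On one side take $\Hom_\C(V,-)$ and $\Ext^1_\C(V,-)$ from $\C$ to $\rMod R$; these land, respectively, in $\Y$ and $\X$, and they assemble into a single functor $F\colon\C\to\Db(R)$ sending an object $C$ to (a complex representing) the data $(\Hom_\C(V,C),\Ext^1_\C(V,C))$ placed in degrees $-1$ and $0$. Concretely, pick for each $C$ a short exact sequence $0\to C'\to V^{(\beta)}\to C''\to 0$ with $C''\in\Gen V$ and then a further presentation, using $\pdim V\le 1$ to control the homological algebra, so that applying $\Hom_\C(V,-)$ produces a two-term complex $Y_1\to Y_0$ of objects of $\Y$ whose cohomologies are $\Hom_\C(V,C)$ and $\Ext^1_\C(V,C)$; by the discussion preceding Lemma~\ref{lemma:representing_objects_in_H} this two-term complex lies in $\H(\X,\Y)$. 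In the other direction, using that $R[1]$ is a tilting object in $\H(\X,\Y)$ with $\End_{\H}R[1]=R$ (Example~\ref{ex:cuore}), one has the functors $H,H'$ and their adjoints $T,T'$; the required quasi-inverse $G\colon\H(\X,\Y)\to\C$ is built from these by transporting along $\End V\cong\End_\H(R[1])$, i.e. $G$ should send $R[1]$ to $V$ and be defined on a general object via the analogous two-term construction inside $\C$.

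Next I would check that $F$ is well defined (independence from the chosen presentations, up to the isomorphisms of complexes described by Noohi's diagrams in Section~2) and is exact: a short exact sequence in $\C$ gives a long exact $\Hom$-$\Ext$ sequence, which by Remark~\ref{rem:ext} and the AB4 hypothesis translates into a short exact sequence of two-term complexes in $\H(\X,\Y)$, using Lemma~\ref{lemma:representing_objects_in_H} to recognize the relevant triangle. Then I would verify $F$ is fully faithful: for $C,D\in\C$ one computes $\Hom_{\H}(FC,FD)$ via Noohi's description and matches it with $\Hom_\C(C,D)$ by a diagram chase, the selfsmallness of $V$ and the identity $\Gen V=V^\perp$ being what makes the two $\mathrm{Hom}$-spaces agree. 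Finally, essential surjectivity: every object of $\H(\X,\Y)$ is isomorphic to some $Y_1\to Y_0$ with $Y_i\in\Y$, and since $\Y=\Im\Hom_\C(V,-)$ one lifts $Y_1\to Y_0$ to a morphism in $\C$ whose appropriately truncated mapping cone maps to it under $F$; here one uses $\overline{\Gen V}=\C$ to see that the lifted object is genuinely in $\C$ and that no cohomology is lost. The statement ``$FV\cong R[1]$'' is then immediate from the construction, since $\Hom_\C(V,V)=R$ sits in degree $-1$ and $\Ext^1_\C(V,V)=0$ because $V\in\Gen V=V^\perp$.

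The main obstacle I expect is the full faithfulness step, specifically reconciling the somewhat rigid Noohi presentation of $\Hom_{\H(\X,\Y)}$ — an isomorphism class of diagrams through an intermediate object $E$ with an exact row $0\to N_1\to E\to M_0\to 0$ — with morphisms in $\C$, while keeping careful track of the identifications $\X=\Im\Ext^1_\C(V,-)$, $\Y=\Im\Hom_\C(V,-)$ and of the counter-equivalence of \cite{CF}. A cleaner route, which I would pursue in parallel, is to avoid building $F$ by hand and instead invoke \cite[Corollary~2.4]{CGM} abstractly: the whole content is that any abelian category with a tilting object is determined by the induced tilted torsion pair, so one only needs the counter-equivalence $(\T,\F)\dualita{}{}(\X,\Y)$ of \cite{CbF},\cite{CF} together with the identification of $\H(\X,\Y)$ as the canonical abelian category attached to $(\X,\Y)$; Example~\ref{ex:cuore} then supplies the matching of tilting objects. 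Either way the delicate point is bookkeeping of the counter-equivalence functors rather than any deep new idea.
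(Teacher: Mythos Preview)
The paper does not prove this theorem: it is stated as a direct citation of \cite[Corollary~2.4]{CGM} and is used as a black box to reduce the main problem to studying hearts of faithful torsion pairs. There is therefore no proof in the present paper to compare your proposal against.

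As for your sketch itself, the overall strategy---build a functor $F\colon\C\to\H(\X,\Y)$ from the pair $(\Hom_\C(V,-),\Ext^1_\C(V,-))$, check exactness, full faithfulness, and essential surjectivity, and observe $FV\cong R[1]$---is the natural one and matches in spirit what is done in \cite{CGM}. Your identification of full faithfulness as the delicate step is accurate. One point that is vaguer than it should be: your description of $F$ (``pick a short exact sequence \ldots\ and then a further presentation'') does not actually produce a well-defined object of $\H(\X,\Y)$ without more care; what is really needed is a functorial two-term resolution, and the cleanest way to get it is to pass through the derived category of $\C$ (or to use directly the tilting equivalence at the derived level, as in \cite{CGM}), rather than to attempt ad hoc choices in $\C$. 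Your ``cleaner route'' paragraph essentially concedes this: the result is genuinely the content of \cite{CGM}, and the present paper simply quotes it.
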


In 1964 Barry Mitchell characterized the module categories as those abelian categories with arbitrary coproducts possessing a small and projective generator \cite{Mit}. Since the tilting objects are a natural generalization of small projective generators, it is natural to place the following

\textbf{Problem}: when an abelian category $\A$ with a tilting object $V$ is equivalent to a module category?

By the above quoted result of Colpi, Gregorio and Mantese this problem is equivalent to understand when the heart $\H(\X,\Y)$ associated with a faithful torsion pair $(\X,\Y)$ is equivalent to a category of modules.
%In \cite{CG} it is shown that $\H(\X,\Y)$ is Grothendieck if and only if $(\X,\Y)$ is a {\it cotilting} torsion pair, i.e., the class $\Y$ is cogenerated by a 1-cotilting right $R$-module \cite[Definition~1.6]{CDT}.

%In this paper we will find necessary and sufficient condition for $\H(\X,\Y)$ to be equivalent to a module category.

\section{Quasi-tilting and tilting modules}

Let $R$ be an associative ring. Applying Definition~\ref{def:tilting} to $\A=\rMod R$ we have that a right $R$-module $V$ is \emph{tilting} if $\Gen V=V^\perp$ or equivalently (see \cite[Proposition~1.3]{CT}) if
%%%%
\begin{enumerate}
\item[(T1)] there exists a short exact sequence $0\to R_1\to R_0\to V\to 0$ with $R_0$, $R_1$ direct summands of a finite direct sum of copies of $R$;
\item[(T2)] $\Ext^1_R(V,V)=0$;
\item[(T3)] there exists a short exact sequence $0\to R\to V_0\to V_1\to 0$ with $V_0$, $V_1$ direct summands of a finite direct sum of copies of $V$.
\end{enumerate}
Let us emphasize that this notion of ``tilting module'' corresponds in the recent literature to that of ``classical 1-tilting module'' \cite[Definition~5.1.1]{GT}.

In \cite{CDT} the following generalization of the tilting notion has been studied:
\begin{definition}[Definition~2.2, \cite{CDT}]
A right $R$-module $V$ is called \emph{quasi-tilting} if it is finitely generated and
\[\Gen V_{R}=\overline{\Gen} V_{R}\cap V_R^{\perp}\]
\end{definition}
Quasi-tilting modules represent the equivalences between a torsion class and a torsion-free class in categories of modules (see \cite[Theorem~2.6]{CDT}. They are an effective generalization of the tilting notion: in \cite[Proposition~2.3]{CDT} it is proved that a quasi-tilting module $V_R$ is a tilting module if and only if it is faithful and $\Gen V$ is closed under products. 

Given a right module $V$ we will denote by $R_{V}$ the quotient ring $R/\Ann V$.

\begin{proposition}\label{prop:quasiversustilting}
Let $V_R$ be a right $R$-module. 
\begin{enumerate}
\item If $V$ is a quasi-tilting $R$-module and $\Gen V_R$ is closed under products, then 
$V$ is a tilting $R_{V}$-module.
\item If $V$ is a tilting $R_{V}$-module and $\Gen V_R$ is closed under extensions, then $V$ is a quasi-tilting $R$-module.
\end{enumerate}
\end{proposition}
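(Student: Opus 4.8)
The plan is to play the module category over $R$ against the module category over $R_V=R/\Ann V$, and I would begin by recording two elementary facts. (a) Every $R$-linear map between two $R_V$-modules is automatically $R_V$-linear; hence $\Hom_R(V,N)=\Hom_{R_V}(V,N)$ for every $R_V$-module $N$, and, since every module generated by $V$ is annihilated by $\Ann V$ (and so is every submodule and every product of such modules), the classes $\Gen V$ and $\overline{\Gen} V$, and the property ``$\Gen V$ is closed under products'', do not depend on whether we work in $\rMod R$ or in $\rMod R_V$; in particular $\Gen V_R=\Gen V_{R_V}$ and $\overline{\Gen} V_R=\overline{\Gen} V_{R_V}$. (b) For every $R_V$-module $N$ the forgetful map $\Ext^1_{R_V}(V,N)\to\Ext^1_R(V,N)$ is injective, because by (a) an $R$-linear splitting of an $R_V$-extension $0\to N\to E\to V\to 0$ is already $R_V$-linear; consequently, if $N\in\Ker\Ext^1_R(V,-)$ then $N\in\Ker\Ext^1_{R_V}(V,-)$.

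For (1), I would reduce to showing that $V$ is a quasi-tilting $R_V$-module: $V$ is faithful over $R_V$ by construction and $\Gen V_R$ is closed under products by hypothesis, so once $V$ is quasi-tilting over $R_V$ the statement follows from \cite[Proposition~2.3]{CDT}. To prove $V$ quasi-tilting over $R_V$ I have to check $\Gen V_{R_V}=\overline{\Gen} V_{R_V}\cap\Ker\Ext^1_{R_V}(V,-)$ (finite generation over $R_V$ being clear). The inclusion ``$\subseteq$'' follows from (b) together with $\Gen V_R\subseteq\Ker\Ext^1_R(V,-)$, which holds since $V$ is quasi-tilting over $R$. For ``$\supseteq$'' I would take $N\in\overline{\Gen} V_{R_V}$ with $\Ext^1_{R_V}(V,N)=0$, fix an embedding $N\hookrightarrow G$ with $G\in\Gen V_R$ (so $G$ and $G/N$ are $R_V$-modules), note that $\Ext^1_R(V,G)=0$ since $V$ is quasi-tilting over $R$ and hence $\Ext^1_{R_V}(V,G)=0$ by (b), and then compare the long exact sequences obtained by applying $\Hom(V,-)$ to $0\to N\to G\to G/N\to 0$ over $R$ and over $R_V$: since the $\Hom$ terms coincide by (a), this forces $\Ext^1_R(V,N)\cong\Ext^1_{R_V}(V,N)=0$, so $N\in\overline{\Gen} V_R\cap\Ker\Ext^1_R(V,-)=\Gen V_R=\Gen V_{R_V}$.

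For (2), I would use that a tilting $R_V$-module is quasi-tilting over $R_V$ and moreover satisfies $\Gen V_{R_V}=\Ker\Ext^1_{R_V}(V,-)$. By (a), $V$ is finitely generated over $R$ and $\Gen V_R$, $\overline{\Gen} V_R$ agree with their $R_V$-versions, so it remains to prove $\Gen V_R=\overline{\Gen} V_R\cap\Ker\Ext^1_R(V,-)$. For ``$\supseteq$'': a module $N$ in the right-hand side is an $R_V$-module lying in $\Ker\Ext^1_{R_V}(V,-)$ by (b), hence in $\overline{\Gen} V_{R_V}\cap\Ker\Ext^1_{R_V}(V,-)=\Gen V_{R_V}=\Gen V_R$. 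For ``$\subseteq$'' it is enough to show $\Gen V_R\subseteq\Ker\Ext^1_R(V,-)$, and this is exactly where the hypothesis that $\Gen V_R$ be closed under extensions enters: given $N\in\Gen V_R$ and any $R$-extension $0\to N\to E\to V\to 0$, closure of $\Gen V_R$ under extensions gives $E\in\Gen V_R$, so $E$ is an $R_V$-module and the extension already lives in $\rMod R_V$; there it splits because $\Ext^1_{R_V}(V,N)=0$ (as $N\in\Gen V_{R_V}=\Ker\Ext^1_{R_V}(V,-)$), and an $R_V$-linear splitting is $R$-linear. As $N$ and the extension were arbitrary, $\Ext^1_R(V,N)=0$.

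The point requiring care throughout is that $\Ext^1$ need not coincide over $R$ and over $R_V$---only the injection of (b) is available in general---so each direction calls for a device to bridge this gap: in (1), resolving $N$ through a module of $\Gen V_R$, where the relevant obstruction vanishes because $V$ is quasi-tilting over $R$; in (2), exploiting closure of $\Gen V_R$ under extensions to confine the middle term of every relevant extension of $V$ to $\rMod R_V$, which makes the two $\Ext^1$-groups coincide.
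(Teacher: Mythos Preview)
Your proof is correct and follows essentially the same route as the paper. Both arguments hinge on the two elementary facts you isolate as (a) and (b), reduce part~(1) to showing that $V$ is quasi-tilting over $R_V$ and then invoke \cite[Proposition~2.3]{CDT}, and handle the crucial inclusion $\Gen V_R\subseteq V_R^{\perp}$ in part~(2) by the same extension-closure trick (force the middle term into $\rMod R_V$, then split there). The only difference is bookkeeping: the paper appeals to \cite[Proposition~2.1(iii)]{CDT}, which gives an equivalent characterisation of quasi-tilting and thereby dispenses with your explicit verification of the ``$\supseteq$'' inclusions; your long-exact-sequence comparison in part~(1) is a correct, self-contained substitute for that citation.
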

\begin{proof}
First we observe that $\Pres V_{R_{V}}=\Pres V_R$, $\Gen V_{R_{V}}=\Gen V_R$ and $V_{R_{V}}$ is finitely generated if and only if $V_R$ is finitely generated.

1. Since $V_R^{\perp}\cap \rMod R_{V}\subseteq V_{R_{V}}^{\perp}$, by
 \cite[Proposition~2.1.(iii)]{CDT}  $V$ is also a quasi-tilting right $R_{V}$-module. By \cite[Proposition~2.3.(iv)]{CDT} if $\Gen V_R$ is closed under products, then $V$ is a tilting $R_{V}$-module.
 
 2. By \cite[Proposition~2.1.(iii)]{CDT}, we have to prove that $\Gen V_R\subseteq V_R^{\perp}$. Consider a short exact sequence in $\rMod R$
 \[0\to X\to Z\to V\to 0;\]
 since $\Gen V_R$ is closed under extensions, the right $R$-module $Z$ belongs to $\Gen V_R$. Then $0\to X\to Z\to V\to 0$ is also a short exact sequence in $\rMod R_{V}$; since $V$ is a tilting $R_{V}$-module, the sequence splits.
\end{proof}

\section{Necessary conditions}

In this section we will give some necessary conditions for the heart associated with a faithful torsion pair to be equivalent to a whole category of modules.

\begin{lemma}\label{lemma:basic}
Let $(\X,\Y)$ be a faithful torsion pair in $\rMod R$ and assume that there is an equivalence \[\H(\X,\Y)\stackrel{*}{\longleftrightarrow} \rMod S\] between the heart associated with $(\X,\Y)$ and the category of right $S$-modules for a suitable associative ring $S$. 
Then there exists a finitely presented right $R$-module ${V}$ generating $\X$ such that
\begin{enumerate}
\item $V$ is a $R_{V}$-tilting module;
\item the ring $S$ corresponds by the equivalence to a complex $R_1\stackrel f{\to} R_0$, where $
R_1$, $R_0$ are finitely generated projective $R$-modules and $\Coker f=V$.
%there exists an exact sequence
%\[R_1\stackrel f{\to} R_0\to {V}\to 0\]
%with $R_1$, $R_0$ finitely generated projective $R$-modules such that
%the complex $R_1\stackrel f{\to} R_0$ corresponds to $S$ in the Morita equivalence.
\end{enumerate}
\end{lemma}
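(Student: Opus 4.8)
The plan is to transport the two distinguished objects of $\rMod S$ back into $\H(\X,\Y)$ along the equivalence. Fix a quasi-inverse $G$ of $*$ and put $P^\bullet:=G(S_S)$; since $S_S$ is a small projective generator of $\rMod S$, the complex $P^\bullet$ is a small projective generator of $\H(\X,\Y)$ with $\End_{\H(\X,\Y)}(P^\bullet)\cong S$. By Example~\ref{ex:cuore} the complex $R[1]$ is a tilting object of $\H(\X,\Y)$, so $W:=*(R[1])$ is a tilting object of $\rMod S$, i.e.\ (by the reformulation (T1)--(T3) of Definition~\ref{def:tilting} for module categories) a classical $1$-tilting $S$-module; in particular $W$ is finitely presented, $\pdim_S W\leq1$, $\End_S W\cong\End_{\H(\X,\Y)}(R[1])\cong R$, and $W$ has a finite projective presentation $0\to Q_1\to Q_0\to W\to0$ over $S$. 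By Brenner--Butler, ${}_RW$ is a classical $1$-tilting left $R$-module with $\End({}_RW)\cong S$, and $\Hom_S(W,-),\Ext^1_S(W,-)$ identify the tilting torsion pair $(\Gen W,\mathcal F_W)$ of $W$ in $\rMod S$ with a torsion pair $(\mathcal Y_R,\mathcal X_R)$ in $\rMod R$, where $\mathcal X_R=\{M_R:M\otimes_RW=0\}$. I would then check that $*$ carries the torsion pair $(\Y[1],\X)$ of Example~\ref{ex:cuore} to $(\Gen W,\mathcal F_W)$ and the functor $H'=\Ext^1_{\H(\X,\Y)}(R[1],-)=H^0(-)$ to $\Ext^1_S(W,-)$; hence $\X=\mathcal X_R$ and the module $V:=H^0(P^\bullet)$ of the statement is $\Ext^1_S(W,S_S)$.

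\textbf{Item (2).} Next I would show that $P^\bullet$ is isomorphic in $\H(\X,\Y)$ to a complex $R_1\xrightarrow{f}R_0$ with $R_0,R_1$ finitely generated projective $R$-modules; then automatically $\Coker f=H^0(P^\bullet)=V$, the ring corresponding to this complex is $\End_{\H(\X,\Y)}(P^\bullet)\cong S$, and $V$ is finitely presented. This is essentially the statement recalled in the introduction, that $\H(\X,\Y)\simeq\rMod S$ means the $t$-structure $(\D^{\leq 0},\D^{\geq 0})$ is the one generated by a two-term tilting complex, which can be taken to be $P^\bullet$ \cite{HKM,CGM}. A direct argument: by the paragraph preceding Lemma~\ref{lemma:representing_objects_in_H} one has $P^\bullet\cong(Y_1\xrightarrow{\phi}Y_0)$ with $Y_i\in\Y$; once $V=H^0(P^\bullet)$ is known to be finitely generated, one may choose $Y_0$ finitely generated projective over $R$ and $Y_1\in\Y$ finitely generated. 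Lemma~\ref{lemma:representing_objects_in_H} then gives an exact sequence $0\to P^\bullet\to Y_1[1]\xrightarrow{\phi[1]}Y_0[1]\to0$ in $\H(\X,\Y)$, and since $P^\bullet$ is projective in $\H(\X,\Y)$ and $\Ext^1_R(Y_0,\Y)=0$ (as $Y_0$ is finitely generated projective), Remark~\ref{rem:ext} applied to this triangle forces $\Ext^1_R(Y_1,\Y)=0$; as $R\in\Y$, any epimorphism $R^n\tra Y_1$ has kernel in $\Y$, hence splits, so $Y_1$ is finitely generated projective.

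\textbf{$V$ generates $\X$.} The functor $H'=H^0(-)$ is right exact — its connecting map lands in $\Ext^2_{\H(\X,\Y)}(R[1],-)=\Hom_{\Db(R)}(R[1],-[2])$, which vanishes on $\H(\X,\Y)$ — and it commutes with coproducts, because $W$ admits a length-one resolution by finitely generated projective $S$-modules. Since $P^\bullet$ generates $\H(\X,\Y)$, each $X\in\X$ receives an epimorphism $(P^\bullet)^{(\alpha)}\tra X[0]$; applying $H'$ gives an epimorphism $V^{(\alpha)}\tra X$ in $\rMod R$, so $\X\subseteq\Gen_RV$, while $\Gen_RV\subseteq\X$ because $V\in\X$ and $\X$ is a torsion class. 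Hence $\Gen_RV=\X$.

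\textbf{Item (1) and the main obstacle.} By Proposition~\ref{prop:quasiversustilting}(1) it suffices that $V$ be a quasi-tilting $R$-module and that $\Gen_RV$ be closed under products. Closure under products holds since $\Gen_RV=\X=\mathcal X_R=\{M_R:M\otimes_RW=0\}$ and ${}_RW$ is finitely presented, so $-\otimes_RW$ commutes with products; quasi-tiltedness follows from \cite{CDT}, as $V=\Ext^1_S(W,S_S)$ represents the Brenner--Butler equivalence $\mathcal F_W\xrightarrow{\sim}\X=\Gen_RV$. Therefore $V$ is a tilting $R_V$-module, which gives (1). The step I expect to be the real obstacle is, inside (2), the passage from ``$P^\bullet$ is a small projective object of the abstract abelian category $\H(\X,\Y)$'' to ``$P^\bullet$ is represented in $\Db(R)$ by a two-term complex of finitely generated projective $R$-modules'' — equivalently, the finiteness of $V$ (and of the $Y_1$ above): a truncated projective resolution only yields $Y_1\in\Y$, and replacing it by a finitely generated projective $R$-module genuinely uses the projectivity and compactness of $P^\bullet$ in $\H(\X,\Y)$ together with the faithfulness of $(\X,\Y)$.
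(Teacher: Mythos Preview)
Your set-up and your argument for item~(1) are essentially the paper's: you transport $R[1]$ to a classical $1$-tilting $S$-module $W$, set $V=\Ext^1_S(W,S)$, invoke \cite{CDT} for quasi-tiltingness, observe that $\X=\Ker(-\otimes_R W)$ is closed under products because ${}_RW$ is finitely presented, and conclude via Proposition~\ref{prop:quasiversustilting}. Your proof that $\Gen V=\X$ (via right-exactness and coproduct-preservation of $H^0$) is a pleasant variant of the paper's citation of \cite{CDT}.

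Where you diverge is item~(2), and the gap you yourself flag is real. Starting from an arbitrary representative $P^\bullet\cong(Y_1\to Y_0)$ and trying to force $Y_0,Y_1$ to be finitely generated projective does not work as written: even granting $V$ finitely generated so that $Y_0$ can be taken in $\add R$, your long-exact-sequence argument only yields $\Ext^1_R(Y_1,\Y)=0$, hence $Y_1$ is \emph{projective} (any free cover of $Y_1$ has kernel in $\Y$ and splits), but not finitely generated --- the extension $0\to\Omega\to Y_1\to\Ker(Y_0\to V)\to 0$ has no reason to have $\Omega=H^{-1}(P^\bullet)$ finitely generated. Your appeal to \cite{HKM,CGM} is legitimate as a black box, but in the logic of this paper that result is a \emph{consequence} of the present lemma (see the closing Remark of the main-result section), so invoking it here is circular.

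The paper dissolves your obstacle by reversing the direction: rather than analysing $P^\bullet$, it \emph{constructs} the complex from the $S$-side using property~(T3) of the tilting module $W$. The coresolution $0\to S\to U_1\to U_0\to 0$ with $U_i\in\add W$ gives, after applying $\Hom_S(W,-)$, a four-term exact sequence
\[
0\to \Hom_S(W,S)\to R_1\to R_0\to V\to 0
\]
with $R_i=\Hom_S(W,U_i)\in\add R$ finitely generated projective; this simultaneously proves $V$ finitely presented and exhibits the desired two-term complex. Since $U_i\cong R_i\otimes_R W=(R_i[1])^*$, applying the equivalence to $0\to S\to U_1\to U_0\to 0$ yields $0\to S^*\to R_1[1]\to R_0[1]\to 0$ in $\H(\X,\Y)$, and Lemma~\ref{lemma:representing_objects_in_H} identifies $S^*$ with $R_1\to R_0$. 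The moral: use (T3) of $W$, not (T1); the finiteness you need is already encoded in the tilting axioms on the $S$-side.
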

\begin{proof}
Let us denote by $U_S$ the right $S$-module $R[1]^*$; since $R[1]$ is a tilting object in $\H(\X,\Y)$, $U_S$ is a tilting $S$-module. Clearly $R\cong\End R[1]\cong\End U_S$. Therefore by \cite{Mi} also ${}_RU$ is a tilting module, in particular it is finitely presented. Let $(\T=\Gen U_S,\F)$ be the torsion pair generated by $U_S$; composing the natural counter equivalence between 
$(\X,\Y)$ in $\rMod R$ and $(\Y[1],\X)$ in $\H(\X,\Y)$ (see Example~\ref{ex:cuore}) with the equivalence $\H(\X,\Y)\stackrel{*}{\leftrightarrow} \rMod S$, we get a counter equivalence  $\T\leftrightarrow\Y$ and $\F\leftrightarrow\X$ induced by ${}_RU_S$ via the $\Hom_S(U_S,-)$ and $-\otimes_R U$ functors and their first derived functors $\Ext^1_S(U,-)$ and $\Tor^R_1(-,U)$.
It is $\Hom_S(U,M)=H^{-1}(M^*)$ and $N\otimes_R U=((N/t_\X(N))[1])^*$ (see Example~\ref{ex:cuore}).

(1) By \cite[Theorems~2.6, 3.4]{CDT}, $V_R := \Ext^1_S({}_RU_S,S)$ is a quasi tilting module which generates $\X$. Since $\X=\Ker -\otimes{}_RU$ and $_RU$ is finitely presented, it is closed under products; by Proposition~\ref{} $V$ is a tilting $R_{V}$-module by Proposition~\ref{prop:quasiversustilting}.

(2) By property (T3) of tilting modules, there exists a short exact sequence
\[0\to S\to U_1\to U_0\to 0\]
with $U_1,U_0$ direct summands of a finite direct sum of copies of $U_S$. Applying $\Hom_S(U,-)$ we get
\[0\to\Hom_S(U,S)\to R_1\to R_0\to {V}\to 0\]
with the $R_i$'s finitely generated projective $R$-modules. 
In particular $V_R$ is finitely presented.

Since $\Hom_S(U,S)$ belongs to $\Y$ and ${V}$ belongs to $\X$, the complex $R_1\to R_0$ belongs to the heart $\H(\X,\Y)$. Let us see that $S^*$ is isomorphic to the complex $R_1\to R_0$.
Since $U_i\cong R_i\otimes_R U=R_i[1]^*$, we have $U_i^*\cong R_i[1]$. Applying the equivalence $*$ to $0\to S\to U_1\to U_0\to 0$ we get
\[0\to S^*\to R_1[1]\to R_0[1]\to 0.\]
By Lemma~\ref{lemma:representing_objects_in_H}, $S^*$ is isomorphic to the complex $R_1\to R_0$.
\end{proof}

In order to understand when the heart of a faithful torsion pair $(\X,\Y)$ in $\rMod R$ is equivalent to a module category, by Lemma~\ref{lemma:basic} (1), we can assume that $\X=\Gen V_R$ where $V$ is a finitely presented $R$-module and a tilting $R_{V}$-module.

\section{Necessary and sufficient conditions for $\H(\X,\Y)$ to be a module category}

Let us assume $(\X,\Y)$ to be a faithful torsion pair in $\rMod R$ with $\X=\Gen V$ where $V_R$ is finitely presented and a tilting $R_{V}$-module. Since the heart $\H(\X,\Y)$ admits arbitrary coproducts \cite[Lemma~3.1]{CGM}, by \cite[Theorem~3.1]{Mit} it is equivalent to a module category if and only if it has a \emph{small projective generator}. By Lemma~\ref{lemma:basic} (2), we can look for a small projective generator among 
the finitely generated projective presentations of ${V}$ in $\rMod R$.

In the sequel, therefore, we will investigate necessary and sufficient conditions on a finitely generated projective presentation $R_1\stackrel{f}{\to} R_0 \to V_R \to 0$ of the tilting $R_{V}$-module $V$ which generates $\X$ for the complex $R_1\stackrel{f}{\to} R_0$ to be a small projective generator of $\H(\X,\Y)$.

\subsection{The smallness of $R_1\stackrel{f}{\to} R_0$}

It is easy to verify that :
\begin{lemma}\label{lemma:S_is_small}
Let $R_1 \to R_0$ be in $\H(\X,\Y)$, with $R_0, R_1$ finitely generated projectives 
in $\rMod R$. Then $R_1 \to R_0$ is small.
\end{lemma}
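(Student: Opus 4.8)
The plan is to show that the complex $P^\bullet := (R_1 \stackrel{f}{\to} R_0)$, sitting in degrees $-1$ and $0$, is small in $\H(\X,\Y)$, meaning that the canonical map $\bigoplus_\alpha \Hom_{\H}(P^\bullet, M^\bullet_\alpha) \to \Hom_{\H}(P^\bullet, \bigoplus_\alpha M^\bullet_\alpha)$ is an isomorphism for every family $(M^\bullet_\alpha)$ in $\H(\X,\Y)$. By Remark~\ref{rem:ext}, $\Hom_{\H}(P^\bullet, M^\bullet) = \Hom_{\Db(R)}(P^\bullet, M^\bullet)$, so it suffices to prove smallness of $P^\bullet$ as an object of $\Db(R)$ with respect to coproducts \emph{taken inside $\H(\X,\Y)$}. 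The subtlety is that coproducts in $\H(\X,\Y)$ need not agree with naive termwise coproducts of complexes; by \cite[Lemma~3.1]{CGM} (and Example~\ref{ex:cuore}) the coproduct in $\H$ of a family $(M^\bullet_\alpha)$ is obtained from the termwise coproduct by applying the torsion truncation in the degree-$0$ cohomology, i.e. it fits in a triangle relating $\bigoplus_\alpha M^\bullet_\alpha$ (termwise) to its image in $\H$.

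First I would reduce to a computation in $\Db(R)$. Since $R_0$ and $R_1$ are finitely generated projective, they are small objects of $\rMod R$, hence the shifted modules $R_i[1]$ and $R_i[0]$ are small in $\Db(R)$ with respect to arbitrary termwise coproducts of complexes: $\Hom_{\Db(R)}(R_i[n], -) = \Hom_{\rMod R}(R_i, H^{-n}(-))$ commutes with coproducts because cohomology does and $R_i$ is small in $\rMod R$. Next, writing the stupid truncation triangle $R_0[0] \to P^\bullet \to R_1[1] \to R_0[1]$ in $\Db(R)$ and applying $\Hom_{\Db(R)}(-, \bigsqcup M^\bullet_\alpha)$ (coproduct in $\H$), I get a morphism of long exact sequences comparing $\bigoplus_\alpha \Hom_{\Db(R)}(-, M^\bullet_\alpha)$ with $\Hom_{\Db(R)}(-, \bigsqcup_\alpha M^\bullet_\alpha)$; by the five lemma it is enough to prove smallness for each of $R_0[0]$, $R_1[1]$, $R_0[1]$ separately — i.e. for $R_i[1]$ and $R_i[0]$ with $R_i$ finitely generated projective.

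So the heart of the matter is: for $P$ finitely generated projective in $\rMod R$ and $n \in \{0,1\}$, the object $P[n]$ is small in $\H(\X,\Y)$. For the termwise coproduct $C^\bullet := \bigoplus_\alpha M^\bullet_\alpha$ one has $H^{-1}(C^\bullet) = \bigoplus H^{-1}(M^\bullet_\alpha) \in \Y$ (torsion-free classes are closed under coproducts) but $H^0(C^\bullet) = \bigoplus H^0(M^\bullet_\alpha) \in \X$ already, since $\X$ is a torsion class hence closed under coproducts; therefore $C^\bullet$ \emph{already lies in $\H(\X,\Y)$}, and the coproduct in $\H$ coincides with the termwise one. (This is the key simplification and it uses only that $(\X,\Y)$ is a torsion pair.) Given this, smallness of $P[n]$ in $\H$ is exactly smallness of $P$ in $\rMod R$ via the identification $\Hom_{\H}(P[n], M^\bullet) = \Hom_{\Db(R)}(P[n], M^\bullet) = \Hom_{\rMod R}(P, H^{-n}(M^\bullet))$, together with the fact that $H^{-n}(-)$ commutes with the (termwise) coproduct. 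Assembling the three pieces through the five-lemma argument of the previous paragraph completes the proof.

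The main obstacle, and the only place one must be careful, is the comparison between coproducts in $\H(\X,\Y)$ and termwise coproducts of complexes: a priori the former requires an extra truncation. The observation that resolves it is that for a family of objects \emph{already in the heart}, the termwise coproduct has degree-$0$ cohomology $\bigoplus_\alpha H^0(M_\alpha^\bullet)$ which remains in $\X$ (closure of the torsion class under coproducts), so no truncation is needed and the two notions of coproduct agree. Once this is in hand the rest is the routine five-lemma dévissage along the truncation triangle of $P^\bullet$ plus the standard smallness of finitely generated projectives in $\rMod R$.
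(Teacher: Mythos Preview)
Your argument is correct and rests on the same key observation as the paper's: coproducts in $\H(\X,\Y)$ are computed termwise. The paper simply cites this from \cite[Lemma~4.1]{CF}, whereas you prove it directly by noting that $\X$ (and hence also $\Y$, via the torsion radical) is closed under coproducts, so the termwise coproduct of objects of the heart already lies in the heart.

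Where you diverge is in the verification of smallness once that is established. The paper observes that since $R_0,R_1$ are projective, any morphism $P^\bullet\to\coprod_\lambda M^\bullet_\lambda$ in $\Db(R)$ is represented by an actual chain map (up to homotopy), and a chain map out of a two-term complex with finitely generated terms lands in finitely many summands---done in one line. You instead run the stupid-truncation triangle $R_0[0]\to P^\bullet\to R_1[1]\to R_0[1]$, compute $\Hom_{\Db(R)}(R_i[n],-)=\Hom_R(R_i,H^{-n}(-))$, and finish with the five lemma. This is perfectly valid but more machinery than needed; the paper's route avoids the d\'evissage entirely. Your approach has the mild advantage of making explicit which closure properties of $(\X,\Y)$ are being used, while the paper's is shorter and more to the point.
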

\begin{proof}
In \cite[Lemma~4.1]{CF} it is shown that $\H(\X,\Y)$ is closed under coproducts in $\Db(R)$ and that coproducts are defined componentwise. Let us prove that any morphism
\[\phi: (R_1 \to R_0) \to \coprod_{\lambda\in\Lambda} M^\bullet_\lambda\]
in $\H(\X\Y)$ factorizes through a finite coproduct $\coprod_{\lambda\in\Lambda_0} M^\bullet_\lambda$ for a finite subset $\Lambda_0$
of $\Lambda$.
Since $R_0, R_1$ are projective $R$-modules, $\phi$ is a morphism in the homotopic category. We conclude since $R_0, R_1$ are finitely generated modules.
\end{proof}

\subsection{When $R_1\stackrel{f}{\to} R_0$ is projective}

We give now necessary and sufficient conditions for the complex $P^\bullet:=R_1\stackrel{f}{\to} R_0$ to be a projective object in the heart $\H(\X,\Y)$. The result is valid for any complex $M_1\to M_0$ in the heart with projective terms.

\begin{proposition}\label{proposition:projectives_in_H}
The complex $P^\bullet:=R_1\stackrel{f}{\to} R_0$ is projective in $\H(\X,\Y)$ if and only if  
$\Hom_{\Db(R)}(P^\bullet,V [1])=0$,
i.e. for any map $\varphi: R_1 \to V$ there exists a map $\psi: R_0 \to V$ such that 
$\varphi=\psi f$.
\end{proposition}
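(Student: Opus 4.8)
The plan is to characterize projectivity of $P^\bullet$ via the vanishing of $\Ext^1_{\H(\X,\Y)}(P^\bullet,-)$ on all of $\H(\X,\Y)$, and then to reduce this to a single $\Ext$-vanishing against $V[1]$. By Remark~\ref{rem:ext}, $\Ext^i_{\H(\X,\Y)}(P^\bullet,A^\bullet)=\Hom_{\Db(R)}(P^\bullet,A^\bullet[i])$, so $P^\bullet$ is projective in $\H(\X,\Y)$ if and only if $\Hom_{\Db(R)}(P^\bullet,A^\bullet[1])=0$ for every $A^\bullet\in\H(\X,\Y)$. First I would observe that since $R_1,R_0$ are projective $R$-modules, maps out of $P^\bullet$ in $\Db(R)$ are computed in the homotopy category, so $\Hom_{\Db(R)}(P^\bullet,C^\bullet)$ depends only on the (good truncation of the) cohomology of $C^\bullet$; in particular, computing $\Hom_{\Db(R)}(P^\bullet,A^\bullet[1])$ for $A^\bullet\in\H(\X,\Y)$ amounts to computing hypercohomology of the two-term complex $A^\bullet[1]$, which sits in degrees $-2$ and $-1$ after the shift.

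Next I would use the fundamental short exact sequence in $\H(\X,\Y)$: for any $A^\bullet\in\H(\X,\Y)$ with $Y:=H^{-1}(A^\bullet)\in\Y$ and $X:=H^0(A^\bullet)\in\X$, there is a short exact sequence $0\to X[0]\to A^\bullet\to Y[1]\to 0$ in $\H(\X,\Y)$ (equivalently a triangle $X[0]\to A^\bullet\to Y[1]\to X[1]$ in $\Db(R)$). Applying $\Hom_{\Db(R)}(P^\bullet,-[1])$ to this triangle gives a long exact sequence relating $\Hom_{\Db(R)}(P^\bullet,A^\bullet[1])$ to $\Hom_{\Db(R)}(P^\bullet,X[1])$ and $\Hom_{\Db(R)}(P^\bullet,Y[2])$. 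Since $P^\bullet$ is concentrated in degrees $-1,0$ with projective terms, $\Hom_{\Db(R)}(P^\bullet,Y[2])=0$ automatically (no maps up by two degrees out of a complex bounded below in this way), so $P^\bullet$ is projective in $\H(\X,\Y)$ if and only if $\Hom_{\Db(R)}(P^\bullet,X[1])=0$ for every $X\in\X$.

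It then remains to descend from all $X\in\X=\Gen V$ to the single module $V$. Since $\Hom_{\Db(R)}(P^\bullet,X[1])$ is, by the homotopy-category description, the cokernel of $\Hom_R(R_0,X)\xrightarrow{-\circ f}\Hom_R(R_1,X)$, it is right exact in $X$ and sends arbitrary direct sums to direct sums (as $R_1$ is finitely generated). Every $X\in\Gen V$ fits in an exact sequence $V^{(\alpha)}\to X\to 0$; applying the (right exact, coproduct-preserving) functor $X\mapsto\Hom_{\Db(R)}(P^\bullet,X[1])$ shows that it vanishes on all of $\X$ as soon as it vanishes on $V$. Conversely $V\in\X$, so the condition on $V$ is necessary. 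Unwinding $\Hom_{\Db(R)}(P^\bullet,V[1])=\Coker\big(\Hom_R(R_0,V)\to\Hom_R(R_1,V)\big)=0$ gives exactly the lifting statement: every $\varphi\colon R_1\to V$ factors as $\psi f$. The main obstacle I anticipate is the careful bookkeeping of which Hom-groups in $\Db(R)$ vanish for degree reasons versus which genuinely impose the condition — in particular verifying that $\Hom_{\Db(R)}(P^\bullet,Y[2])=0$ and that the relevant connecting maps behave as claimed — but this is routine once one works in the homotopy category using projectivity of the terms of $P^\bullet$.
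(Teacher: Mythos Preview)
Your argument is correct and a bit more direct than the paper's, with one small slip: the canonical short exact sequence in $\H(\X,\Y)$ coming from the torsion pair $(\Y[1],\X)$ is
\[
0 \to H^{-1}(A^\bullet)[1] \to A^\bullet \to H^{0}(A^\bullet)[0] \to 0,
\]
i.e.\ $Y[1]$ is the subobject and $X[0]$ the quotient, not the other way around. This does not affect your conclusion, since in either orientation the long exact sequence sandwiches $\Hom_{\Db(R)}(P^\bullet,A^\bullet[1])$ between $\Hom_{\Db(R)}(P^\bullet,X[1])$ and $\Hom_{\Db(R)}(P^\bullet,Y[2])$, and the latter vanishes for degree reasons exactly as you say.

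The paper proceeds differently for each piece of the torsion pair. For the torsion class $\Y[1]$ it invokes the short exact sequence $0\to P^\bullet\to R_1[1]\to R_0[1]\to 0$ in $\H(\X,\Y)$ together with the tilting property of $R[1]$ (so $\Ext^1_{\H}(R_i[1],\Y[1])=0$) and the bound $\pdim_{\H}P^\bullet\leq 1$; you bypass all of this with the one-line degree computation $\Hom_{\Db(R)}(P^\bullet,Y[2])=0$. For the torsion-free class $\X$, the paper again uses $\pdim_{\H}P^\bullet\leq 1$ to push $\Ext^1$ along the epimorphism $V^{(\alpha)}\twoheadrightarrow X$, whereas you identify $\Hom_{\Db(R)}(P^\bullet,(-)[1])$ concretely as the cokernel of $f^*$ and use its right exactness and preservation of coproducts directly. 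Your route is more elementary---it never leaves the homotopy category and avoids establishing $\pdim_{\H}P^\bullet\leq 1$---while the paper's argument stays inside the abelian category $\H(\X,\Y)$ and makes the role of the tilting object $R[1]$ explicit. Both are valid.
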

\begin{proof}
If $P^\bullet$ is projective in $\H(\X,\Y)$, then we have by Remark~\ref{rem:ext}
\[\Hom_{\Db(R)}(P^\bullet,V [1])=\Ext_{\H(\X,\Y)}^1(P^\bullet, V) =0.
\]

Conversely, assume $\Hom_{\Db(R)}(P^\bullet,V[1])=0$. Since $(\Y[1],\X)$ is a torsion pair in $\H(\X,\Y)$, in order to prove that $P^\bullet$ is projective it is enough to check that $\Ext_{\H(\X,\Y)}^1(P^\bullet,\Y[1]) = 0 = \Ext_{\H(\X,\Y)}^1(P^\bullet,\X)$. 
By Lemma~\ref{lemma:representing_objects_in_H}\[
0 \to P^\bullet \to R_1[1]\to R_0[1]\to 0
\]
is an exact sequence in $\H(\X,\Y)$. 

The objects $R_1[1]$ and $R_0[1]$ belong to $\Add (R[1])$ and hence they have projective dimension $\leq 1$; hence first we have $\pdim(P^\bullet)\leq 1$. 

Since $R[1]$ is tilting and $\Gen_{\H(\X,\Y)} R[1]=\Y[1]$, by Definition~\ref{def:tilting} it is $\Add (R[1])\subseteq \Ker\Ext^1(-,\Y[1])$; therefore we get $\Ext^1(P^\bullet,\Y[1])=0$.

Let now $X$ be a right $R$-module in $\X=\Gen V$; consider an epimorphism ${V}^{(\alpha)}\to X\to 0$. It is also an epimorphism between stalk complexes in $\H(\X,\Y)$. Since
\[
\Ext_{\H(\X,\Y)}^1(P^\bullet,{V}^{(\alpha)})=\Hom_{\Db(R)}(P^\bullet,V[1]^{(\alpha)})\subseteq \]
\[\subseteq\Hom_{\Db(R)}(P^\bullet,V[1]^{\alpha})=\Hom_{\Db(R)}(P^\bullet,V[1])^{\alpha}=0,\]
and $\pdim(P^\bullet)\leq 1$, we conclude that $\Ext_{\H(\X,\Y)}^1(P^\bullet, X)=0$.
\end{proof}

\begin{remark}\label{rem:piccolo}
By Proposition~\ref{proposition:projectives_in_H} the complex $R_1\stackrel{f}{\to}R_0$ is projective in $\H(\X,\Y)$ if and only if
\[\phi(\Ker f)=0\quad\text{for each }\phi\in\Hom(R_1,V).\]
Since $R_1\leq^{\oplus} R^m$ for a suitable $m\in\mathbb N$, and 
\[\bigcap_{\phi\in\Hom(R^m,V)}\Ker\phi=(\Ann_R V)^m\]
 we have that $R_1\stackrel{f}{\to}R_0$ is projective if and only if $\Ker f$ as submodule of $R^m$ is contained in $(\Ann_RV)^m$. This condition suggests to choose a presentation $R_1\stackrel{f}{\to} R_0$ of $V$ with $\Ker f$ \emph{as small as possible}.
\end{remark}

The next result goes in the same direction of the above remark. But first let us recall the following useful classical homological result.

\begin{lemma}\cite[Lemma B.1]{JL}
%[15, Lemma B.1]
\label{lemma:silvana}
Consider the following diagram in $\rMod R$ with exact rows
\[\xymatrix{
&C\ar[r]^f\ar[d]_h&C'\ar[r]\ar@{.>}[dl]^p\ar[d]&C''\ar@{.>}[dl]^q\ar[r]\ar[d]_{\ell}&0\\
0\ar[r]&L\ar[r]&M\ar[r]_g&N
}
\]
There exists $q:C''\to M$ such that $g\circ q=\ell$ if and only if there exists $p:C'\to L$ such that $p\circ f=h$.
\end{lemma}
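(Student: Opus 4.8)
The plan is to prove Lemma~\ref{lemma:silvana} by a straightforward diagram chase, exploiting the symmetry of the two conditions. First I would set up notation: write $i\colon L\to M$ for the inclusion (the left map of the bottom row), so the bottom row reads $0\to L\xrightarrow{i} M\xrightarrow{g} N$ with $g\circ i=0$ and $\ker g=\operatorname{Im} i$; and the top row is exact with $C\xrightarrow{f} C'\to C''\to 0$, where I denote the middle map $C'\to C''$ by $e$, so $e$ is epic with $\ker e=\operatorname{Im} f$. The two vertical maps $h\colon C\to L$ and $\ell\colon C''\to N$ are given, and by commutativity of the original square $g\circ(\text{the map }C'\to M)$-type compatibilities hold; in particular the composite $C\xrightarrow{h} L\xrightarrow{i} M$ equals the composite $C\xrightarrow{f} C'\xrightarrow{(\cdot)} M$ where the unlabeled vertical map $C'\to M$ is part of the data, and $g$ composed with that vertical map equals $\ell\circ e$.

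Next I would prove the forward implication ($q$ exists $\Rightarrow$ $p$ exists). Suppose $q\colon C''\to M$ satisfies $g\circ q=\ell$. Consider the unlabeled middle vertical map $v\colon C'\to M$ from the diagram; then $g\circ v=\ell\circ e=g\circ q\circ e$, so $g\circ(v-q\circ e)=0$, hence $v-q\circ e$ factors through $\ker g=\operatorname{Im} i$. Since $i$ is monic, there is a unique $p\colon C'\to L$ with $i\circ p=v-q\circ e$. It remains to check $p\circ f=h$: compute $i\circ p\circ f=(v-q\circ e)\circ f=v\circ f-q\circ(e\circ f)=v\circ f$, because $e\circ f=0$ ($\operatorname{Im} f=\ker e$). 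But $v\circ f=i\circ h$ by commutativity of the upper-left square. Since $i$ is monic, $p\circ f=h$, as desired. The reverse implication is entirely dual in structure — it uses that $e$ is epic rather than that $i$ is monic — so I would either write it out symmetrically or simply note that it follows by a dual chase: given $p$ with $p\circ f=h$, the map $v-i\circ p\colon C'\to M$ kills $\operatorname{Im} f=\ker e$, hence factors as $q\circ e$ for a unique $q\colon C''\to M$ (using $e$ epic), and one checks $g\circ q=\ell$ by precomposing with the epimorphism $e$ and using $g\circ v=\ell\circ e$.

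I do not expect any serious obstacle here; the only point requiring a little care is bookkeeping of which composites commute, since the diagram in the statement leaves the middle vertical map $C'\to M$ unlabeled and the commutativity relations must be read off correctly — in particular that the upper square ($C\to C'\to M$ versus $C\to L\to M$) and the lower square ($C'\to C''\to N$ versus $C'\to M\to N$) both commute. Once those relations are fixed, each implication is a two-line chase using exactness of the rows. Since Lemma~\ref{lemma:silvana} is quoted from \cite[Lemma~B.1]{JL}, I would in fact give only a brief indication along these lines rather than a full proof, referring the reader to \cite{JL} for details.
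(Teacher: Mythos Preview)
Your diagram chase is correct, and your final remark is exactly right: the paper does not supply a proof of this lemma at all, it simply states it with the citation to \cite[Lemma~B.1]{JL}. So there is nothing to compare against on the paper's side, and your sketch already goes further than the paper does.
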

%\begin{proof}
%See \cite[Lemma B.1]{JL}
%\end{proof}

\begin{proposition}\label{prop:projectivecover}
If the module $R_1$  in the complex $P^\bullet:=R_1  \overset{f}\to R_0$ is a projective cover of $\Im f$, then $P^\bullet$ is a projective object in $\H(\X,\Y)$.
\end{proposition}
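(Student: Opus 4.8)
The plan is to reduce the statement to the criterion of Proposition~\ref{proposition:projectives_in_H}, namely that $P^\bullet$ is projective in $\H(\X,\Y)$ precisely when every $\varphi\colon R_1\to V$ factors through $f$. So fix an arbitrary $\varphi\colon R_1\to V$; the goal is to produce $\psi\colon R_0\to V$ with $\varphi=\psi f$. First I would set up the diagram that lets me apply Lemma~\ref{lemma:silvana}. Let $K=\Ker f$, so there is an exact sequence $0\to K\to R_1\xrightarrow{f}\Im f\to 0$, and I want to compare it with the given $\varphi$. The key observation is that, since $R_1$ is a projective cover of $\Im f$, the kernel $K$ is superfluous (small) in $R_1$, i.e. $K\subseteq \operatorname{rad} R_1$.

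The next step is to show that $\varphi$ kills $K$, i.e. $\varphi(K)=0$; once this is known, $\varphi$ factors through $R_1/K\cong \Im f\hookrightarrow R_0$, and since $V\in\X=\Gen V$ is... wait, actually $V$ need not be injective, so one cannot directly extend along $\Im f\hookrightarrow R_0$. Instead, after establishing $\varphi(K)=0$, one gets an induced map $\overline\varphi\colon \Im f\to V$, and one applies Lemma~\ref{lemma:silvana}: take the top row $R_1\xrightarrow{f}R_0\to\Coker f=V\to 0$ (here $\Im f = f(R_1)$ plays the role via the factorization $R_1\twoheadrightarrow\Im f\hookrightarrow R_0$), and the bottom row $0\to 0\to V\xrightarrow{\mathrm{id}}V$, with vertical maps $\varphi\colon R_1\to V$, the sought $\psi\colon R_0\to V$, and $0\colon V\to V$ on the cokernel. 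Since the induced map on cokernels is $0$ and $g=\mathrm{id}_V$ is the bottom right map with $N=V$, Lemma~\ref{lemma:silvana} says such $\psi$ with $\mathrm{id}_V\circ\psi$ inducing $0$ on $\Coker$ exists iff there is $p\colon 0\to 0$ with $p\circ f = \varphi$, which forces $\varphi\circ(\text{inclusion of }K)=0$, i.e. exactly $\varphi(K)=0$. So the whole proof reduces to the claim $\varphi(K)=0$.

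To prove $\varphi(K)=0$: argue by contradiction or directly. Since $K$ is superfluous in $R_1$ (because $R_1\to\Im f$ is a projective cover), $K\subseteq\operatorname{rad}R_1$. Now $\varphi(K)$ is a submodule of $V$; if it were nonzero one would want a contradiction. The cleanest route: consider the short exact sequence $0\to\Im f\to R_0\to V\to 0$ — this is a triangle-free way to see that $V\in\X$ and $\Im f\in\Y$ (since $R_0\in\Y$ and $\Y$ is closed under submodules, $\Im f\in\Y$; and by hypothesis $R_1\to R_0$ lies in $\H(\X,\Y)$ so $\Coker f=V\in\X$, consistent). Hmm, but this does not immediately give $\varphi(K)=0$. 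The honest argument: by Remark~\ref{rem:piccolo}, $P^\bullet$ is projective iff $\phi(K)=0$ for all $\phi\in\Hom(R_1,V)$, so one literally must show $\varphi(K)=0$ for every such $\varphi$, and the hypothesis "$R_1$ is a projective cover of $\Im f$" is exactly what is designed to force this. Here is the mechanism: given $\varphi\colon R_1\to V$ and writing $\pi\colon R_0\to V$ for the canonical projection $R_0\twoheadrightarrow\Coker f=V$, both $\varphi$ and $\pi f\colon R_1\to V$ are epimorphisms onto... no — $\varphi$ need not be epi.

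\textbf{The real argument for $\varphi(K)=0$, and the main obstacle.} Let me reconsider. The correct statement to extract is: since $R_1$ is a projective cover of $\Im f$, for \emph{any} module $N$ and any $\alpha\colon R_1\to N$ that factors through $R_1\twoheadrightarrow \Im f$ — equivalently any $\alpha$ with $\alpha(K)=0$ — nothing new is said; rather, the point is that $R_1$ being a projective cover is equivalent to: every endomorphism or map out of $R_1$ inducing an automorphism on $\Im f$ is itself an automorphism. The clean homological fact I would actually use is Lemma~\ref{lemma:silvana} applied to the diagram with top row $K\xrightarrow{\subseteq}R_1\xrightarrow{f}\Im f\to 0$ and bottom row $0\to 0\to V\xrightarrow{\mathrm{id}} V$ — no, that is circular. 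So the main obstacle is pinning down precisely why the projective-cover hypothesis forces $\varphi(K)=0$: I expect one decomposes $R^m=R_1\oplus R_1'$ (Remark~\ref{rem:piccolo}) and uses that a projective cover has no superfluous-kernel-escaping maps, but the technical heart is an application of Lemma~\ref{lemma:silvana} to transport a lifting across $f$. Concretely: build the diagram $\xymatrix{K\ar[r]\ar[d]_{\varphi|_K}&R_1\ar[r]^f\ar[d]_\varphi&\Im f\ar[r]\ar[d]&0\\ 0\ar[r]&0\ar[r]&V\ar[r]_{=}&V}$; Lemma~\ref{lemma:silvana} gives: there is $q\colon\Im f\to V$ with (the identity) $\circ q$ = (induced map on the rightmost column) iff there is $p\colon R_1\to 0$ with $p\circ(\subseteq)=\varphi|_K$, the latter being automatic only if $\varphi|_K=0$. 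Thus one must first independently establish $\varphi|_K=0$ from superfluity. I would do this by noting $\varphi(K)\subseteq \varphi(\operatorname{rad}R_1)=\operatorname{rad}(\varphi(R_1))\cdot$(no — $\varphi(\operatorname{rad}R_1)\subseteq\operatorname{rad}V$ only when $\varphi$ is onto or $V$ semiperfect), so the genuinely delicate point, and the one I flag as the main difficulty, is handling the case where $R$ is not semiperfect: there the argument must instead use that $\Im f\in\Y$, that $V\in\X$, that $\Hom_R(\text{something in }\Y,\text{something in }\X)$ vanishes appropriately, combined with $K$ superfluous in the \emph{projective} module $R_1$, to conclude $\varphi$ annihilates $K$. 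Once $\varphi(K)=0$ is in hand, the factorization $\varphi=\psi f$ and hence projectivity of $P^\bullet$ via Proposition~\ref{proposition:projectives_in_H} is immediate.
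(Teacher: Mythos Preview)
There is a genuine gap: you never prove the crucial claim $\varphi(K)=0$, and none of the routes you sketch can close it. Superfluity of $K$ in $R_1$ by itself places no restriction on an arbitrary map $\varphi\colon R_1\to V$; in particular $K\subseteq\operatorname{rad}R_1$ does not force $\varphi(K)=0$, and the torsion-pair orthogonality runs the wrong way ($\Hom(\X,\Y)=0$, not $\Hom(\Y,\X)=0$), so that idea is inert. You also never invoke the one structural fact that makes the proposition true, namely that $V$ is quasi-tilting, so $\Gen V\subseteq V^\perp$.

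The paper's argument supplies exactly the missing mechanism, and it does \emph{not} first prove $\varphi(K)=0$ in isolation. Instead it forms the pushout $Q$ of $f$ and $\varphi$, obtaining a second exact row $0\to\varphi(K)\to V\xrightarrow{g}Q\to V\to 0$. Because $\Im g\in\Gen V\subseteq V^\perp$, the short exact sequence $0\to\Im g\to Q\to V\to 0$ splits; combined with the projectivity of $R_0$ this yields a map $k\colon R_0\to V$ with $\hat g\circ\varphi=\hat g\circ k\circ f$, where $\hat g\colon V\twoheadrightarrow\Im g$. Now one applies Lemma~\ref{lemma:silvana} to the truncated rows $0\to K\to R_1\to\Im f\to 0$ and $0\to\varphi(K)\to V\to\Im g\to 0$: the existence of $\hat k:=k|_{\Im f}$ lifting the right vertical map gives, by the lemma, a map $\theta\colon R_1\to\varphi(K)$ with $\theta|_K$ equal to the corestriction $\hat\varphi\colon K\twoheadrightarrow\varphi(K)$. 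Since $\hat\varphi$ is already surjective, $R_1=K+\Ker\theta$; \emph{this} is where superfluity of $K$ enters, forcing $\Ker\theta=R_1$, hence $\theta=0$, hence $\hat\varphi=0$, i.e.\ $\varphi(K)=0$. Then $\hat g$ is injective and $\hat g\circ\varphi=\hat g\circ k\circ f$ gives $\varphi=k\circ f$. The point you were missing is that superfluity is not applied to $\varphi$ directly but to the auxiliary map $\theta$ produced by the pushout-and-splitting manoeuvre, and that manoeuvre is only available because $\Ext^1_R(V,\Gen V)=0$.
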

\begin{proof}
Let $\phi$ be a morpism in $\Hom_{\Db(R)}(P^\bullet,{V}[1])$ and denote by $\Omega$ the kernel of $f:R_1\to R_0$. Consider the following diagram in $\rMod R$ with exact rows 
\[
\xymatrix{
0\ar[r]&\Omega\ar[d]_{\hat\phi}\ar[r]^j&R_1\ar[d]_\phi\ar[r]^f&R_0\ar[d]\ar[r]&{V}\ar@{=}[d]\ar[r]&0\\
0\ar[r]&\phi(\Omega)\ar[r]&{V}\ar[r]^g&Q\ar[r]&{V}\ar[r]&0}
\]
where $Q$ is the pushout of the maps $f$ and $\phi$.
Since $\Im g$ belongs to $\Gen {V}\subseteq {V}^{\perp}$ (see Proposition~\ref{prop:quasiversustilting}.(2)), the sequence
\[0\to\Im g\to Q\to {V}\to 0
\]
splits. Therefore by Lemma~\ref{lemma:silvana} and the projectivity of $R_0$ we get
\[
\xymatrix{
0\ar[r]&\Omega\ar[d]_{\hat\phi}\ar[r]^j&R_1\ar[d]_\phi\ar[rr]^f&&R_0\ar[d]\ar[r]\ar@{-->}[dll]_k\ar@{.>}[ddl]&{V}\ar@{=}[d]\ar@{.>}[dl]\ar[r]&0\\
0\ar[r]&\phi(\Omega)\ar[r]&{V}\ar@{->>}[rd]_{\hat g}\ar[rr]^g&&Q\ar[r]&{V}\ar[r]&0\\
{}&&&\Im g\ar@{_(->}[ur]
}
\]
such that $\hat g\circ\phi=\hat g\circ k\circ f$. Let us denote by $\hat k$ the restriction of $k$ to $\Im f$; applying again Lemma~\ref{lemma:silvana} we have the following diagram
\[
\xymatrix{
0\ar[r]&\Omega\ar[d]_{\hat\phi}\ar[r]^j&R_1\ar[d]_{\phi}\ar@{.>}[dl]_{\theta}\ar[r]^{\hat f}&\Im f\ar[d]\ar[r]\ar[dl]_{\hat k}&0\\
0\ar[r]&\phi(\Omega)\ar[r]&{V}\ar[r]^{\hat g}&\Im g\ar[r]&0
}
\]
with $\theta\circ j=\hat\phi$. Therefore, since $\Im\theta=\Im\hat\phi$, we have $R_1=\Im j+\Ker\theta$;  since $\Im j$ is superfluous in $R_1$, we get $\Ker\theta=R_1$, i.e. $\hat\phi=\theta\circ j=0$. Thus the map $\hat g$ is a monomorphism and from $\hat g\circ\phi=\hat g\circ k\circ f$ we get $\phi=k\circ f$. We conclude by Proposition~\ref{proposition:projectives_in_H}.
\end{proof}

\subsection{When $R_1\stackrel{f}{\to} R_0$ is a generator}

Assume $R_1\stackrel{f}{\to} R_0\to {V}\to 0$ is a projective presentation of ${V}$ such that the complex $P^\bullet=R_1\to R_0$ is a projective object in the heart $\H(\X,\Y)$. 
Continue to denote by $\Omega$ the kernel of $f$.

In this section we give necessary and sufficient conditions for  $P^\bullet$ to be a generator in $\H(\X,\Y)$. Let us start with some preliminary lemmas.
\begin{lemma}\label{lemma:sigmaM}
Let $M$ be a right $R$-module and assume $\Gen M=\Pres M$ and closed under extensions. Consider a module $L$ in $\overline{\Gen} M$; then
\begin{enumerate}
\item there exists a short exact sequence
\[0\to L\to X\to M^{(\alpha)}\to 0\]
with $X\in\Gen M$;
\item any extension of $M^{(\alpha)}$ by $L$ belongs to $\overline{\Gen} M$.
\end{enumerate}
\end{lemma}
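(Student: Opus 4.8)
The plan is to exploit that $L\in\overline{\Gen}M$ means $L$ embeds in some $G\in\Gen M$, say $0\to L\to G\to G/L\to 0$; since $\Gen M=\Pres M$ we may present $G$ as a cokernel of a map between coproducts of copies of $M$, and in fact $G$ itself lies in $\Gen M$, so we have an epimorphism $M^{(\beta)}\tra G$. For part (1), I would pull back the inclusion $L\hookrightarrow G$ along this epimorphism to obtain a module $X$ with a short exact sequence $0\to L\to X\to M^{(\beta)}\to 0$ — wait, that goes the wrong way; instead I form the pushout. More precisely: take a short exact sequence $0\to K\to M^{(\beta)}\xrightarrow{\pi} G\to 0$ and consider the pullback of $\pi$ along $L\hookrightarrow G$, giving $0\to K\to X\to L\to 0$ with a map $X\to M^{(\beta)}$. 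That still is not of the claimed shape. The correct move is: since $\Gen M=\Pres M$, write $G$ as the cokernel of $g\colon M^{(\alpha)}\to M^{(\gamma)}$, so $\Im g\in\Gen M$ and $0\to \Im g\to M^{(\gamma)}\to G\to 0$; then pull back along $L\hookrightarrow G$ to get $0\to \Im g\to X\to L\to 0$ and push this out — actually the cleanest path is: the pullback $X$ of $M^{(\gamma)}\tra G\hookleftarrow L$ sits in $0\to K\to X\to L\to 0$ where $K=\ker(M^{(\gamma)}\tra G)=\Im g\in\Gen M$, AND in $0\to L'\to X\to M^{(\gamma)}\to 0$ with $L'\cong ?$. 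I will instead reverse roles: realize $L\hookrightarrow G\in\Gen M$, pick $M^{(\alpha)}\tra G$, form the pullback $X=\{(m,l): \pi(m)=\iota(l)\}\subseteq M^{(\alpha)}\oplus L$; then $X\tra L$ with kernel $\ker\pi\in\Gen M$ (as $\Gen M=\Pres M$), and $X\to M^{(\alpha)}$ is mono with cokernel $G/L\in\Gen M$. So $X$ is an extension of two modules in $\Gen M$, hence $X\in\Gen M$ by the closure under extensions hypothesis, and $0\to L\to X\to \ker\pi\to 0$? No — in the pullback the sequence with $L$ on the left is $0\to\ker\pi\to X\to L\to 0$. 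To land $M^{(\alpha)}$ on the right of a sequence starting with $L$, I take instead $0\to L\to X\to M^{(\alpha)}/?$.

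Let me restart the sketch cleanly. Fix $0\to L\xrightarrow{\iota} G\xrightarrow{p} G/L\to 0$ with $G\in\Gen M$; since $\Gen M=\Pres M$, also $G/L\in\Gen M=\Pres M$, so choose an epimorphism $q\colon M^{(\alpha)}\tra G/L$. Form the pullback $X$ of $p$ and $q$:
\[
\xymatrix@-1pc{
0\ar[r]&L\ar[r]\ar@{=}[d]&X\ar[r]\ar[d]&M^{(\alpha)}\ar[r]\ar[d]^q&0\\
0\ar[r]&L\ar[r]_{\iota}&G\ar[r]_p&G/L\ar[r]&0
}
\]
The middle vertical map $X\to G$ is an epimorphism with kernel $\ker q$, and $\ker q\in\Gen M$ because $\Gen M=\Pres M$. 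Thus $0\to\ker q\to X\to G\to 0$ exhibits $X$ as an extension of $G\in\Gen M$ by $\ker q\in\Gen M$; since $\Gen M$ is closed under extensions, $X\in\Gen M$, and the top row is the desired sequence $0\to L\to X\to M^{(\alpha)}\to 0$. This proves (1).

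For (2), suppose $0\to L\xrightarrow{u} W\xrightarrow{v} M^{(\alpha)}\to 0$ is any extension with $L\in\overline{\Gen}M$. Apply (1) to $L$ to get $0\to L\to X\to M^{(\beta)}\to 0$ with $X\in\Gen M$. Form the pushout of $u$ and $L\hookrightarrow X$:
\[
\xymatrix@-1pc{
0\ar[r]&L\ar[r]^u\ar[d]&W\ar[r]^v\ar[d]&M^{(\alpha)}\ar[r]\ar@{=}[d]&0\\
0\ar[r]&X\ar[r]&Z\ar[r]&M^{(\alpha)}\ar[r]&0
}
\]
In the pushout, the left vertical map $L\to X$ is mono, hence so is $W\to Z$, with cokernel $X/L\cong M^{(\beta)}$; so $0\to W\to Z\to M^{(\beta)}\to 0$ and hence $W\in\overline{\Gen}M$ provided $Z\in\Gen M$ — it suffices to show $Z\in\Gen M$. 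But the bottom row of the pushout square gives $0\to X\to Z\to M^{(\alpha)}\to 0$ with $X\in\Gen M$ and $M^{(\alpha)}\in\Gen M$; since $\Gen M$ is closed under extensions, $Z\in\Gen M$. Therefore $W\in\overline{\Gen}M$, proving (2).

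The only delicate points are purely formal: checking that in a pushout along a monomorphism the parallel map stays a monomorphism with isomorphic cokernel (standard in an abelian category), and keeping track of which sequences are "pullback rows" versus "pushout rows." I expect no real obstacle; the single idea doing all the work is that $\Gen M=\Pres M$ forces kernels of epimorphisms between coproducts of $M$ (and of surjections onto $\Gen M$-objects) to land back in $\Gen M$, so that the closure of $\Gen M$ under extensions can be applied.
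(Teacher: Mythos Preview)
Your proof is correct and follows essentially the same route as the paper's: for (1) you embed $L$ in some $G\in\Gen M$, take an epimorphism $M^{(\alpha)}\twoheadrightarrow G/L$, and form the pullback to obtain $X$, which lies in $\Gen M$ as an extension of $G$ by $\ker q\in\Gen M$; for (2) you push out along a mono $L\hookrightarrow(\text{object of }\Gen M)$ to embed $W$ into an extension lying in $\Gen M$. The paper does exactly this (using the original embedding $L\hookrightarrow X_1$ in (2) rather than the sequence from (1), a cosmetic difference). One small point worth making explicit: to guarantee $\ker q\in\Gen M$ you should choose $q$ to arise from a presentation $M^{(\beta)}\to M^{(\alpha)}\to G/L\to 0$ (as the paper does), rather than an arbitrary epimorphism.
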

\begin{proof}
There exists a short exact sequence
\[0\to L\stackrel{\iota}\to X_1\to X_2\to 0\]
with $X_1, X_2$ belonging to $\Gen M$. 

(1) The module $X_2$ is an homomorphic image of $M^{(\alpha)}$ for a suitable cardinal $\alpha$.
Then we have the following diagram with exact rows and columns
\[
\xymatrix{
&&0&0\\
0\ar[r]&L\ar[r]&X_1\ar[u]\ar[r]^p&X_2\ar[u]\ar[r]&0\\
0\ar[r]&L\ar@{=}[u]\ar[r]&X\ar[u]\ar[r]&M^{(\alpha)}\ar[u]^{\pi}\ar[r]&0\\
&&M^{(\beta)}\ar[u]\ar@{=}[r]&M^{(\beta)}\ar[u]
}
\]
where $X$ is the pullback of the maps $p$ and $\pi$. Clearly $X$ results to be an extension of modules in $\Gen M$, and therefore it belongs to $\Gen M$.

(2) Let $Y$ be any extension of $M^{(\alpha)}$ by $L$. We have the following diagram with exact rows
\[\xymatrix{
0\ar[r]&L\ar@{^(->}[d]_\iota\ar[r]^\psi& Y\ar@{^(->}[d]\ar[r]&M^{(\alpha)}\ar@{=}[d]\ar[r]&0\\
0\ar[r]&X_1\ar[r]&Q\ar[r]&M^{(\alpha)}\ar[r]&0
}
\]
where $Q$ is the pushout of $\iota$ and $\psi$. Since $\Gen M$ is closed under extensions, we have that $Q$ belongs to $\Gen M$ and hence $Y$ is subgenerated by $M$.
\end{proof}

Let us denote by $\overline{\overline{\Gen}} {V}$ the class of  right $R$-modules $M$ which admit a finite filtration
\[M=M_0\geq M_1\geq ...\geq M_k=0,\quad k\in\mathbb N\]
with $M_i/M_{i+1}\in\overline\Gen {V}$ for $i=0,..., k-1$.
\begin{lemma}\label{lemma:chiusoquozienti}
The class $\overline{\overline{\Gen}} {V}$ is closed under submodules, quotients and extensions.
\end{lemma}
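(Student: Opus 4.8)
The plan is to prove the three closure properties of $\overline{\overline{\Gen}}\,V$ separately, reducing each to a statement about $\overline{\Gen}\,V$ and then lifting it through finite filtrations. The key background facts I will use are that $\overline{\Gen}\,V$ is closed under submodules and quotients by its very definition, that $\Gen V=\Pres V$ is closed under extensions (this follows from Proposition~\ref{prop:quasiversustilting}.(2), since $V$ being tilting over $R_V$ with $\Gen V_R$ closed under extensions makes $V$ quasi-tilting, hence $\Gen V=\overline{\Gen} V\cap V^\perp=\Pres V$), and crucially Lemma~\ref{lemma:sigmaM}.

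First I would treat quotients. Given $M$ in $\overline{\overline{\Gen}}\,V$ with filtration $M=M_0\geq M_1\geq\dots\geq M_k=0$ having factors in $\overline{\Gen}\,V$, and an epimorphism $M\tra N$, I take the induced filtration $N=N_0\geq N_1\geq\dots\geq N_k=0$ where $N_i$ is the image of $M_i$. Each factor $N_i/N_{i+1}$ is a quotient of $M_i/M_{i+1}$ (the map $M_i/M_{i+1}\to N_i/N_{i+1}$ is surjective), hence lies in $\overline{\Gen}\,V$, so $N\in\overline{\overline{\Gen}}\,V$. For submodules, given $L\leq M$ I intersect: $L=L_0\geq L_1\geq\dots\geq L_k=0$ with $L_i=L\cap M_i$. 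Then $L_i/L_{i+1}=(L\cap M_i)/(L\cap M_{i+1})$ embeds into $M_i/M_{i+1}$, and since $\overline{\Gen}\,V$ is closed under submodules each factor stays in $\overline{\Gen}\,V$; thus $L\in\overline{\overline{\Gen}}\,V$. Extensions require more care and this is where Lemma~\ref{lemma:sigmaM} enters: given $0\to L\to M\to N\to 0$ with $L,N\in\overline{\overline{\Gen}}\,V$, it suffices by an obvious induction on filtration lengths to handle the case where $L\in\overline{\Gen}\,V$ and $N\in\overline{\Gen}\,V$, and show the extension lies in $\overline{\overline{\Gen}}\,V$. Actually the cleanest reduction is: using the filtration of $N$, pull back along $M\to N$ to get a filtration of $M$ refining $0\to L\to M\to N\to 0$ whose successive factors are $L$ (at the bottom) and the factors $N_i/N_{i+1}$ of $N$ (which are in $\overline{\Gen}\,V$); then further refine using the filtration of $L$. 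So the genuine obstacle is: if $L\in\overline{\Gen}\,V$ and $0\to L\to Y\to \overline{\Gen}V\text{-object}\to 0$, is $Y\in\overline{\overline{\Gen}}\,V$? Here I would use Lemma~\ref{lemma:sigmaM}.(2): an object of $\overline{\Gen}\,V$ is a submodule of some $X'\in\Gen V$, and I would push out the extension appropriately, but more directly Lemma~\ref{lemma:sigmaM}.(2) says precisely that any extension of $V^{(\alpha)}$ by $L\in\overline{\Gen}\,V$ lies in $\overline{\Gen}\,V$; writing the $\overline{\Gen}V$-quotient as a submodule of $V^{(\alpha)}$ and forming the pullback reduces the general extension to one of this form, placing $Y$ inside a two-step filtration with factors in $\overline{\Gen}\,V$.

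The main obstacle, as indicated, is the extension-closure argument: the definition of $\overline{\overline{\Gen}}\,V$ only allows \emph{finite} filtrations with factors in $\overline{\Gen}\,V$, and a naive extension of two objects need not immediately present such a filtration, so one must carefully manufacture it. The technical heart is the reduction to extensions of a $\overline{\Gen}\,V$-object by another $\overline{\Gen}\,V$-object and the invocation of Lemma~\ref{lemma:sigmaM}.(1)--(2) to control those; the rest is routine diagram-chasing with pullbacks and pushouts. I expect no difficulty with submodules and quotients, which are purely formal.
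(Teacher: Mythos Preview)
Your arguments for submodules and quotients are correct and in fact more direct than the paper's: you simply intersect (respectively push forward) the given filtration and observe that $\overline{\Gen}\,V$ is closed under subobjects and quotients. The paper instead proves these two closures by induction on the filtration length, using extension-closure (established first) as an ingredient in the inductive step. Your route is shorter and avoids that dependence.

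Where you go astray is in the assessment of extensions. Your own concatenation remark is already a complete proof: given filtrations of $L$ and $N$ with factors in $\overline{\Gen}\,V$, pulling back the filtration of $N$ along $M\to N$ and then refining the bottom piece by the filtration of $L$ yields a finite filtration of $M$ with all factors in $\overline{\Gen}\,V$. This is precisely what the paper means by ``Clearly $\overline{\overline{\Gen}}\,V$ is closed under extensions.'' The subsequent ``genuine obstacle'' you identify is a non-obstacle: if $L,N\in\overline{\Gen}\,V$ and $0\to L\to Y\to N\to 0$ is exact, then $0\leq L\leq Y$ is already the required two-step filtration, with factors $L$ and $Y/L\cong N$ both in $\overline{\Gen}\,V$ by hypothesis. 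No appeal to Lemma~\ref{lemma:sigmaM} is needed here; that lemma is invoked later in the paper (in Lemma~\ref{lemma:tecnico}) for an entirely different purpose. So your proof works, but the detour through Lemma~\ref{lemma:sigmaM} and the diagnosis of extensions as ``the main obstacle'' should both be dropped: extension-closure is the trivial part.
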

\begin{proof}
Clearly $\overline{\overline{\Gen}} {V}$ is closed under extensions. Let us see that $\overline{\overline{\Gen}} {V}$ is closed under submodules and quotients. Let $M$ be in $\overline{\overline{\Gen}} {V}$; consider the filtration
\[M=M_0\geq M_1\geq ...\geq M_k=0,\quad k\in\mathbb N\]
with $M_i/M_{i+1}\in\overline\Gen {V}$ for $i=0,..., k-1$.
We prove the closure by induction on the length $k$ of the filtration. If $k=1$, then $M$ belongs to $\overline\Gen {V}$ and the latter is closed under submodules and quotients. Let $k>1$ and $L\leq M$. Consider the diagram with exact rows
\[\xymatrix{0\ar[r]&M_1\ar[r]^\varepsilon&M\ar[r]& M/M_1\ar[r]&0\\
0\ar[r]&L\cap M_1\ar[u]\ar[r]&L\ar@{^(->}[u]^\iota\ar[r]&(L+M_1)/M_1\ar@{=}[u]\ar[r]&0
}
\]
By the inductive hypothesis, since $M_1$ has a filtration of length $k-1$ and $M/M_1$ belongs to $\overline{\Gen}V$, the modules $L\cap M_1$ and $(L+M_1)/M_1$ belong to $\overline{\overline{\Gen}} {V}$. Then we conclude that $L$ belongs to $\overline{\overline{\Gen}} {V}$.
Consider now the diagram with exact rows
\[\xymatrix{0\ar[r]&M_{k-1}\ar@{=}[d]\ar[r]&M\ar[r]^q\ar@{->>}[d]^p& M/M_{k-1}\ar[r]\ar[d]&0\\
&M_{k-1}\ar[r]&M/L\ar[r]&N\ar[r]&0
}
\]
where $N$ is the pushout of $p$ and $q$. Since
$M/M_{k-1}\in\overline{\overline{\Gen}} {V}$ has a filtration of length $k-1$, by induction $N$ belongs to $\overline{\overline{\Gen}} {V}$; then $M/L$ is an extension of $N$ by a homomorphic image of $M_{k-1}\in \overline{\Gen} V$ and hence it belongs to  $\overline{\overline{\Gen}} {V}$.
\end{proof}

We continue to denote by $T$, as in the Example~\ref{ex:cuore}, the functor $\rMod R\to\H(\X,\Y)$ which associates with a module $N_R$ the complex $N/t_\X(N) [1]$ where $t_\X$ is the radical of the torsion pair $(\X,\Y)$.

\begin{lemma}\label{lemma:tecnico}\label{lemma:nuovo}
Let $L\in\Y\cap\overline{\overline{\Gen}} {V}$. Then $T(L)=L[1]$ belongs to $\Gen P^{\bullet}$.
%If $N$ belongs to $\Y\cap\overline{\Gen}{V}$, then $TN=N[1]$ is generated by $P^{\bullet}$.
\end{lemma}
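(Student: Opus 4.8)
The goal is to show that if $L \in \Y \cap \overline{\overline{\Gen}}\,V$, then $L[1] \in \Gen_{\H(\X,\Y)} P^\bullet$. The plan is to produce, for such an $L$, an epimorphism $(P^\bullet)^{(\alpha)} \to L[1]$ in $\H(\X,\Y)$. I would argue by induction on the length $k$ of a filtration $L = L_0 \geq L_1 \geq \dots \geq L_k = 0$ with consecutive quotients in $\overline{\Gen}\,V$. Since $\Gen P^\bullet$ is closed under quotients (being the generated-part of a torsion-like class, or directly: a quotient of a coproduct of copies of $P^\bullet$ is again generated by $P^\bullet$), and since the class of $L$ for which the lemma holds should be shown closed under extensions inside $\H(\X,\Y)$, the inductive step reduces — via the short exact sequence $0 \to L_1 \to L \to L/L_1 \to 0$ in $\rMod R$, which gives (after applying $T$, noting all three modules lie in $\Y$ so $T$ acts as $[1]$) a short exact sequence $0 \to L_1[1] \to L[1] \to (L/L_1)[1] \to 0$ in $\H(\X,\Y)$ — to the base case $k=1$, i.e. $L \in \Y \cap \overline{\Gen}\,V$.

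For the base case, let $L \in \Y \cap \overline{\Gen}\,V$. Since $V$ is a tilting $R_V$-module, $\Gen V = \Pres V$, and (using Proposition~\ref{prop:quasiversustilting}, under the standing hypothesis that $\Gen V$ is closed under extensions) $\Gen V$ is closed under extensions; so Lemma~\ref{lemma:sigmaM}(1) applies and yields a short exact sequence $0 \to L \to X \to V^{(\alpha)} \to 0$ in $\rMod R$ with $X \in \Gen V = \X$. Since $\X = \Gen V$ is a torsion class and $L \in \Y$, this sequence is precisely the canonical sequence realizing $X$ as an extension with torsion part $V^{(\alpha)}$... but more importantly, applying the functors $H, H'$ of Example~\ref{ex:cuore}: the complex $L[1]$ has $H^{-1} = L$, $H^0 = 0$, while $V^{(\alpha)}[0]$ sits in degree $0$, and $X$ lies in $\X$. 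From $0 \to L \to X \to V^{(\alpha)} \to 0$ one gets in $\Db(R)$ a triangle whose rotation exhibits a short exact sequence in $\H(\X,\Y)$ of the form
\[
0 \to X[0] \to V^{(\alpha)}[1] \to ?
\]
— here I need to be careful. The cleaner route: consider the projective presentation $R_1^{(\alpha)} \xrightarrow{f^{(\alpha)}} R_0^{(\alpha)} \to V^{(\alpha)} \to 0$, so $(P^\bullet)^{(\alpha)} = (R_1^{(\alpha)} \to R_0^{(\alpha)})$ has $H^0 = V^{(\alpha)}$ and $H^{-1} = \Omega^{(\alpha)}$ where $\Omega = \Ker f \subseteq \Y$ (since $\Omega \subseteq R_1$ and $R_1 \in \add R \subseteq \Y$). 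Applying $H'=\Ext^1_\H(R[1],-)=H^0$: we have an exact sequence in $\H$, namely $0 \to \Omega^{(\alpha)}[1] \to (P^\bullet)^{(\alpha)} \to V^{(\alpha)}[0]$ ... and combining with $0 \to L[1] \to X[0]$-type data from Lemma~\ref{lemma:sigmaM}, I want to build a map $(P^\bullet)^{(\alpha)} \to L[1]$ whose cokernel vanishes. Concretely: the sequence $0 \to L \to X \to V^{(\alpha)} \to 0$ gives an element of $\Ext^1_R(V^{(\alpha)}, L)$, and lifting along the projective presentation of $V^{(\alpha)}$ produces a chain map from $(P^\bullet)^{(\alpha)}$ to $(L \to 0) = L[1]$, i.e. a morphism $\pi: (P^\bullet)^{(\alpha)} \to L[1]$ in $\Db(R)$, hence in $\H$. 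Using Noohi's description of morphisms and cokernels in $\H(\X,\Y)$ (Section on Notation), I would check that $\pi$ is an epimorphism: its cokernel is computed from the relevant pushout/quotient, and the extension-closedness of $\Gen V$ together with $X \in \X$ forces the cokernel to be $0$ — essentially because $\Coker \pi$ would be a subquotient measuring the failure of the lift to be surjective, which the construction via Lemma~\ref{lemma:sigmaM}(1) rules out.

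The main obstacle I anticipate is the base case: verifying that the morphism $(P^\bullet)^{(\alpha)} \to L[1]$ assembled from Lemma~\ref{lemma:sigmaM}(1) is genuinely an \emph{epimorphism in $\H(\X,\Y)$}, not just a morphism. This requires computing a cokernel in the heart — which by Noohi's formula involves identifying the submodule $A$ between $\Im k$ and $\Ker \rho$ with $A/\Im k \in \X$ and $\Ker\rho / A \in \Y$ — and showing it is zero. I expect the key leverage to be: (i) $H'$ is right exact (it is $H^0$ restricted to $\H$, and on short exact sequences of $\H$ it gives the long exact sequence), so surjectivity on $H^0$ plus surjectivity on $H^{-1}$ modulo something in $\X$ suffices; (ii) on $H^0$-level the map $(P^\bullet)^{(\alpha)} \to L[1]$ induces $V^{(\alpha)} \to 0$ and on $H^{-1}$-level it induces $\Omega^{(\alpha)} \to L$, and the cokernel in $\H$ vanishes iff the latter map is epi in $\rMod R$ — which is exactly what the sequence $0 \to L \to X \to V^{(\alpha)} \to 0$ with $X \in \Gen V = \Pres V$ delivers after a diagram chase (pull the presentation of $V^{(\alpha)}$ back along $X \twoheadrightarrow V^{(\alpha)}$). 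Everything else — the reduction to $k=1$, closure of $\Gen P^\bullet$ under quotients and of the relevant class under extensions in $\H$, the identification $T|_\Y = [1]$ — is routine given the results already available in the excerpt.
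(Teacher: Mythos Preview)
Your inductive step contains a genuine gap. You claim that from the short exact sequence $0 \to L_1 \to L \to L/L_1 \to 0$ in $\rMod R$, ``all three modules lie in $\Y$ so $T$ acts as $[1]$''. But torsion-free classes are not closed under quotients: while $L \in \Y$ forces $L_1 \in \Y$, there is no reason for $L/L_1$ to lie in $\Y$. Consequently $T(L/L_1)$ need not equal $(L/L_1)[1]$, the sequence $0 \to L_1[1] \to L[1] \to (L/L_1)[1] \to 0$ need not be exact in $\H(\X,\Y)$, and you cannot invoke the base case on $L/L_1$. The paper repairs exactly this point: it replaces $L/L_1$ by its torsion-free part $K \in \Y \cap \overline{\Gen}\,V$ and replaces $L_1$ by the pullback $Q$ of $L \twoheadrightarrow L/L_1$ along $t_\X(L/L_1) \hookrightarrow L/L_1$. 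Then $Q \leq L$ lies in $\Y$, the sequence $0 \to Q \to L \to K \to 0$ has all terms in $\Y$, and one shows $TQ \in \Gen P^\bullet$ by applying $T$ to $0 \to L_1 \to Q \to t_\X(L/L_1) \to 0$ (the last term is torsion, so $T$ kills it and $TQ$ is a quotient of $TL_1$).

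Your base case is also shakier than you suggest. You propose building a chain map $(P^\bullet)^{(\alpha)} \to L[1]$ by lifting $R_0^{(\alpha)} \to V^{(\alpha)}$ through $X$, and you assert the resulting morphism is epi in $\H$ ``iff $\Omega^{(\alpha)} \to L$ is epi in $\rMod R$''. But the map $\Omega^{(\alpha)} \to L$ obtained this way is \emph{zero} (it factors through $\Omega^{(\alpha)} \to R_0^{(\alpha)}$, which is zero), so that criterion cannot be right. The paper bypasses all of this: applying the long exact sequence of the pair $(T',T)$ to $0 \to L \to X \to V^{(\alpha)} \to 0$ with $L \in \Y$ and $X, V^{(\alpha)} \in \X$ yields directly an epimorphism $T'V^{(\alpha)} = V^{(\alpha)}[0] \twoheadrightarrow TL = L[1]$ in $\H(\X,\Y)$; since $V^{(\alpha)}[0]$ is visibly a quotient of $(P^\bullet)^{(\alpha)}$, one is done. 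Your direct chain-map construction can be made to work, but the verification that it is epi in the heart requires more care than you indicate.
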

\begin{proof}
The module $L$ has a finite filtration
\[L=L_0\geq L_1\geq ...\geq L_k=0,\quad k\in\mathbb N\]
with $L_i/L_{i+1}\in\overline\Gen {V}$ for $i=0,..., k-1$. We prove the claim by induction on the length $k$ of the filtration of $L$. 

$k=1$:
Since $V$ is a tilting $R_{V}$-module, then  $\Gen V=\Pres V$ and the latter is closed under extensions. Therefore we can apply Lemma~\ref{lemma:sigmaM}.(1) to get a short exact sequence
\[0\to N\to X\to {V}^{(\alpha)}\to 0\]
with $X\in\Gen {V}$. Applying the functor $T$ we get $T' {V}^{(\alpha)}\to TN\to TX=0$. It is easy to see that $T' ({V}^{(\alpha)})$ is generated by $P^{\bullet}$: indeed by \cite[Corollary~3.3]{No} the following is an epimorphism on the heart $\H(\X,\Y)$:
\[\xymatrix@-1pc{
&R_1^{(\beta)}\ar[dd]&&0\ar[dd]\\
{P^{\bullet}}^{(\beta)}\ar@{=}[r]&{\phantom{AA}}\ar[rr]&&{\phantom{AA}}\\
&R_0^{(\beta)}&&{V}^{(\beta)}
}\]
Therefore also $TN$ belongs to $\Gen P^{\bullet}$.

$k>1$: We have the following diagram with exact rows and columns
\[\xymatrix{
&&0\ar[d]&0\ar[d]\\
0\ar[r]&L_1\ar@{=}[d]\ar[r]&Q\ar[d]\ar[r]&t(L/L_1)\ar[d]^i\ar[r]&0\\
0\ar[r]&L_1\ar[r]&L\ar@{->>}[d]\ar[r]^p&L/L_1\ar@{->>}[d]\ar[r]&0\\
&&K\ar@{=}[r]&K
}\]
where $t(L/L_1)$ is the torsion part of $L/L_1$ and $Q$ is the pullback of  $i$ and $p$. Since $L_1$ has a filtration of length $k-1$,  by inductive hypothesis  $TL_1\in \Gen P^{\bullet}$. Therefore applying $T$ to the first row of the above diagram we get that $TQ\in\Gen P^{\bullet}$.
Now  $K$ is the torsion free part of $L/L_1$ and therefore it belongs to $\Y$; since $L/L_1\in\overline\Gen {V}$, the module $K$ belongs also to $\overline\Gen {V}$. By Lemma~\ref{lemma:tecnico} we have $TK\in\Gen P^{\bullet}$. Since $Q,L, K$ belong to $\Y$ we have the following short exact sequence in $\H(\X,\Y)$
\[0\to TQ\to TL\to TK\to 0\]
By a standard argument, since $P^{\bullet}$ is projective, $\Gen P^{\bullet}$ is closed under extensions and so $TL$ belongs to $\Gen P^{\bullet}$.
\end{proof}

\begin{proposition}\label{prop:genbarbar}
The projective object $P^\bullet:=R_1\stackrel{f}{\to} R_0$  is a generator of $\H(\X,\Y)$ if and only if
there exists a cardinal $\alpha$ and a morphism $g\in\Hom_R(R_1^{(\alpha)},R)$ such
that the cokernel of the restriction $g_{|\Omega^{(\alpha)}}$ belongs to $\overline{\overline{\Gen}} {V}$.
\end{proposition}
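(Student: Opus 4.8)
The approach is to reduce the statement to the assertion that $P^\bullet$ \emph{generates} $R[1]$, and then to translate this last condition into the stated one. For the reduction, note first that for every $N^\bullet\in\H(\X,\Y)$ the canonical truncation triangle in $\Db(R)$ gives, via Remark~\ref{rem:ext}, an exact sequence $0\to H^{-1}(N^\bullet)[1]\to N^\bullet\to H^0(N^\bullet)[0]\to0$ in $\H(\X,\Y)$ (the torsion decomposition of $N^\bullet$ for the torsion pair $(\Y[1],\X)$); applied to $P^\bullet$, whose cohomologies are $\Omega\in\Y$ and $V\in\X$, and composed with epimorphisms $V^{(\gamma)}\tra X$ for $X\in\X=\Gen V$, this shows that $P^\bullet$ always generates $\X[0]$. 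If moreover $R[1]\in\Gen P^\bullet$, then $\Y[1]=\Gen_{\H(\X,\Y)}R[1]\subseteq\Gen P^\bullet$ by Example~\ref{ex:cuore}; and then for arbitrary $M^\bullet$ one lifts an epimorphism $(P^\bullet)^{(\gamma)}\tra H^0(M^\bullet)[0]$ to a map $u\colon(P^\bullet)^{(\gamma)}\to M^\bullet$ using the projectivity of $P^\bullet$, notes that $\Coker u$ is a quotient of $H^{-1}(M^\bullet)[1]$ and hence lies in the torsion class $\Y[1]\subseteq\Gen P^\bullet$, and adds to $u$ a lift of an epimorphism onto $\Coker u$ to obtain an epimorphism onto $M^\bullet$. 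Thus $P^\bullet$ is a generator of $\H(\X,\Y)$ if and only if $R[1]\in\Gen P^\bullet$.

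To analyze when $R[1]\in\Gen P^\bullet$, I describe the morphisms into $R[1]$. By Lemma~\ref{lemma:representing_objects_in_H} there is an exact sequence $0\to P^\bullet\to R_1[1]\to R_0[1]\to0$ in $\H(\X,\Y)$ (here $R_0,R_1\in\Y$, being submodules of free $R$-modules, as $R\in\Y$), and taking the coproduct of $\alpha$ copies — coproducts in $\H(\X,\Y)$ being exact and computed componentwise — yields $0\to(P^\bullet)^{(\alpha)}\stackrel{\iota}{\to} R_1^{(\alpha)}[1]\to R_0^{(\alpha)}[1]\to0$. Applying $\Hom_{\H(\X,\Y)}(-,R[1])$, using $\Ext^1_{\H(\X,\Y)}(R_0^{(\alpha)}[1],R[1])=0$ (since $R_0^{(\alpha)}[1]\in\Add R[1]\subseteq\Ker\Ext^1_{\H(\X,\Y)}(-,\Y[1])$, cf.\ the proof of Proposition~\ref{proposition:projectives_in_H}) and $\Hom_{\H(\X,\Y)}(R_i^{(\alpha)}[1],R[1])=\Hom_R(R_i^{(\alpha)},R)$, one obtains that every morphism $(P^\bullet)^{(\alpha)}\to R[1]$ is of the form $\phi_g:=g[1]\circ\iota$ for some $g\in\Hom_R(R_1^{(\alpha)},R)$; moreover $H^{-1}(\phi_g)$ equals the restriction $g_{|\Omega^{(\alpha)}}\colon\Omega^{(\alpha)}\to R$.

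Now the equivalence. If $P^\bullet$ is a generator, choose an epimorphism $(P^\bullet)^{(\alpha)}\tra R[1]$; it is some $\phi_g$, and $0\to\Ker\phi_g\to(P^\bullet)^{(\alpha)}\stackrel{\phi_g}{\to} R[1]\to0$ is exact in $\H(\X,\Y)$, so its cohomology long exact sequence — using $H^0(R[1])=0$, $H^{-1}(R[1])=R$ and $H^{-1}(\phi_g)=g_{|\Omega^{(\alpha)}}$ — gives an exact sequence $0\to\Coker(g_{|\Omega^{(\alpha)}})\to H^0(\Ker\phi_g)\to V^{(\alpha)}\to0$ with $H^0(\Ker\phi_g)\in\X=\Gen V$; hence $\Coker(g_{|\Omega^{(\alpha)}})$ is a submodule of a module in $\Gen V$, i.e.\ lies in $\overline{\Gen}V\subseteq\overline{\overline{\Gen}}V$. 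Conversely, suppose $\Coker(g_{|\Omega^{(\alpha)}})\in\overline{\overline{\Gen}}V$ for some $\alpha$ and $g$, and set $Q^\bullet:=\Coker\phi_g$ in $\H(\X,\Y)$. The cohomology long exact sequences of $0\to\Im\phi_g\to R[1]\to Q^\bullet\to0$ and of $0\to\Ker\phi_g\to(P^\bullet)^{(\alpha)}\to\Im\phi_g\to0$ show that $H^0(Q^\bullet)=0$ — whence $Q^\bullet=Y[1]$ with $Y=H^{-1}(Q^\bullet)\in\Y$ — and exhibit $Y$ as an extension of $H^0(\Im\phi_g)\in\X$ by $R/H^{-1}(\Im\phi_g)$, the latter being a quotient of $\Coker(g_{|\Omega^{(\alpha)}})$ (because $g_{|\Omega^{(\alpha)}}=H^{-1}(\phi_g)$ factors through the monomorphism $H^{-1}(\Im\phi_g)\hookrightarrow R$). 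Since $\X\subseteq\overline{\overline{\Gen}}V$ and, by Lemma~\ref{lemma:chiusoquozienti}, $\overline{\overline{\Gen}}V$ is closed under quotients and extensions, we get $Y\in\Y\cap\overline{\overline{\Gen}}V$ and therefore $Q^\bullet=Y[1]=T(Y)\in\Gen P^\bullet$ by Lemma~\ref{lemma:nuovo}. Lifting an epimorphism $(P^\bullet)^{(\beta)}\tra Q^\bullet$ along $R[1]\tra Q^\bullet$ (projectivity of $P^\bullet$) and adding it to $\phi_g$ produces an epimorphism $(P^\bullet)^{(\alpha)}\amalg(P^\bullet)^{(\beta)}\tra R[1]$; so $R[1]\in\Gen P^\bullet$ and, by the first step, $P^\bullet$ is a generator.

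The main obstacle is the middle step: pinning down exactly which morphisms $(P^\bullet)^{(\alpha)}\to R[1]$ occur and computing their first cohomology, i.e.\ building the dictionary between morphisms in $\H(\X,\Y)$ — and the cohomology of their kernels and images — and the module maps $g_{|\Omega^{(\alpha)}}$. Once that dictionary is available, both implications are short, using only the closure properties of $\overline{\overline{\Gen}}V$ from Lemma~\ref{lemma:chiusoquozienti} and Lemma~\ref{lemma:nuovo}.
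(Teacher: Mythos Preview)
Your proof is correct and follows the same overall architecture as the paper's: reduce to $R[1]\in\Gen P^\bullet$, identify morphisms $(P^\bullet)^{(\alpha)}\to R[1]$ with module maps $g\colon R_1^{(\alpha)}\to R$, and for the converse show that $\Coker\phi_g$ is of the form $Y[1]$ with $Y\in\Y\cap\overline{\overline{\Gen}}V$, then invoke Lemma~\ref{lemma:nuovo} and lift via projectivity.

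The one genuine difference is in how you compute $\Coker\phi_g$. The paper uses Noohi's explicit description of cokernels in $\H(\X,\Y)$: it writes down the map $h=(g,-f^{(\alpha)})\colon R_1^{(\alpha)}\to R\oplus R_0^{(\alpha)}$, builds a $3\times 3$ diagram in $\rMod R$ to identify $(R\oplus R_0^{(\alpha)})/\Im h$ as an extension of $V^{(\alpha)}$ by $R/g(\Omega^{(\alpha)})$, and then passes to the torsion-free quotient to obtain the module $Y$. You bypass Noohi entirely by factoring $\phi_g$ through its image in the abelian category $\H(\X,\Y)$ and reading off the two cohomology long exact sequences; this yields directly that $H^0(Q^\bullet)=0$ and that $Y=H^{-1}(Q^\bullet)$ sits in an extension $0\to R/H^{-1}(\Im\phi_g)\to Y\to H^0(\Im\phi_g)\to 0$ whose terms lie in $\overline{\overline{\Gen}}V$. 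Your route is shorter and needs only the closure properties of Lemma~\ref{lemma:chiusoquozienti}; the paper's route is more explicit and identifies $Y$ concretely, which is useful in the worked examples of the final section where one actually wants to write down the projective generator.
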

\begin{proof}
Let assume that $P^\bullet$ is a projective generator. Then there is an exact sequence in $\H(\X,\Y)$ of the form 
\[(*)\qquad
0 \to {K}^\bullet \to {P^\bullet}^{(\alpha)}\overset{\phi}\to R[1] \to 0
\]
Here $\phi$ is a complex map from ${P^\bullet}^{(\alpha)} = R_1^{(\alpha)} \overset{f^{(\alpha)}}\to R_0^{(\alpha)}$ to $R[1]$. Clearly only the component on degree -1  of $\phi$ is different from 0: it is the wanted morphism $g$ in $\Hom_R(R_1^{(\alpha)},R)$. 
Indeed, the exact sequence $(*)$ induces the long exact sequences of homologies
\[
H^{-1}{P^\bullet}^{(\alpha)} = \Omega^{(\alpha)} \overset{H^{-1}\phi}\to H^{-1} R[1] = R \to H^0 K^\bullet
\]
Since $H^0 K^\bullet$ belongs to $\X$, and $H^{-1}\phi$ is the restriction of $g$ at $\Omega^{(\alpha)}$, we get that the cokernel of the restriction $g_{|\Omega^{(\alpha)}}$ belongs to $\overline{\Gen} {V}$.

Conversely, assume the existence of the cardinal $\alpha$ and the map $g$ in $\Hom_R(R_1^{(\alpha)},R)$ such that the cokernel of the restriction $g_{|\Omega^{(\alpha)}}$ belongs to $\overline{\overline{\Gen}} {V}$. First notice that in order to prove that $P^{\bullet}$ is a generator it is enough to prove that  $P^{\bullet}$ generates  the tilting object $R[1]$. Indeed, by construction, there is an epimorphism in the heart
$P^{\bullet}\to V$ and hence $P^{\bullet}$ generates the torsion free class $\X=\Gen V$; if $P^{\bullet}$
generates also $R[1]$, then it generates the torsion class $\Y[1]$ and so, being projective, also the extensions of $\X$ by $\Y[1]$, i.e.  the whole category $\H(\X, \Y)$.
%\[\xymatrix@-1pc{
%&R_1\ar[dd] &&0\ar[dd]\\
%P^{\bullet}\ar@{=}[r]&{\phantom{AA}}\ar[rr]&&{\phantom{AA}} \ar@{=}[r] & {V}[0]\\
%&R_0&&{{V}}.
%}\]

Let us consider the following map in the heart:
\[
\xymatrix@-1pc{
&&&R_1^{(\alpha)}\ar[dd]_{f^{(\alpha)}}\ar[r]^g&R\ar[dd]\\
P^{\bullet(\alpha)}\ar[r]^\mu &R[1]&:={}\\
&&&R_0^{(\alpha)}\ar[r]&0
}\]
Let us denote by $h$ the map 
\[R_1^{(\alpha)}\stackrel{\left(g, -f^{(\alpha)}\right)}\longrightarrow R\oplus R_0^{(\alpha)}.\]
Then we have the following diagram with exact rows and columns in $\rMod R$:
\[
\xymatrix{
&0\ar[d]&0\ar[d]&0\ar[d]\\
0\ar[r]&g(\Ker f^{(\alpha)})=g(\Omega^{(\alpha)})\ar[d]\ar[r]&R\ar[d]\ar[r]&R/g(\Omega^{(\alpha)})\ar[d]\ar[r]&0\\
0\ar[r]&\Im h\ar[r]\ar@{->>}[d]&R\oplus R_0^{(\alpha)}\ar@{->>}[d]\ar[r]&R\oplus R_0^{(\alpha)}/\Im h\ar[r]\ar@{->>}[d]&0\\
0\ar[r]&\Im f^{(\alpha)}\ar[r]&R_0^{(\alpha)}\ar[r]&{{V}}^{(\alpha)}\ar[r]&0
}
\]
By hypothesis $R/g(\Omega^{(\alpha)})$ belongs to $\overline{\overline{\Gen}} {V}$; therefore also $R\oplus R_0^{(\alpha)}/\Im h$ belongs to $\overline{\overline{\Gen}} {V}$ by Lemma~\ref{lemma:chiusoquozienti}.
%\[\mathcal B=\{M\in\rMod R: \exists 0\to L\to M\to X\to 0\text{ with }L\in
%\overline{\overline{\Gen}} {V},\ X\in\Gen {V}\}.\]
Let $A/\Im h$ with $\Im h\leq A\leq R\oplus R_0^{(\alpha)}$ the torsion part of $R\oplus R_0^{(\alpha)}/\Im h$; then $R\oplus R_0^{(\alpha)}/A$ belongs to $\Y$. By the following diagram and Lemma~\ref{lemma:chiusoquozienti} it belongs also to $\overline{\overline{\Gen}} {V}$:
%to $\mathcal B$ too: indeed, consider the diagram
\[\xymatrix{
0\ar[r]&R/g(\Omega^{(\alpha)})\ar@{=}[d]\ar[r]&R\oplus R_0^{(\alpha)}/\Im h\ar[r]^-p\ar@{->>}[d]^q&{{V}}^{(\alpha)}\ar[r]\ar@{->>}[d]&0\\
&R/g(\Omega^{(\alpha)})\ar[r]&R\oplus R_0^{(\alpha)}/A\ar[r]&X\ar[r]&0}\]
where $X$ is the pushout of $p$ and $q$. 
%Clearly $X$ belongs to $\Gen {V}$ and, being $\overline{\overline{\Gen}}V$ closed under quotients (see Lemma~\ref{lemma:chiusoquozienti}), $R\oplus R_0^{(\alpha)}/A$ belongs to $\Y\cap \mathcal B$.
By Lemma~\ref{lemma:nuovo} $T (R\oplus R_0^{(\alpha)}/A)$ belongs to $\Gen P^\bullet$. Following \cite[section 3.4]{No}, $(R\oplus R_0^{(\alpha)}/A) [1]$ is the cokernel in $\H(\X,\Y)$ of $P^{\bullet(\alpha)}\stackrel{\mu}{\to} R[1]$.
Thus we have the following diagram with exact row in $\H(\X,\Y)$:
\[\xymatrix{P^{\bullet(\alpha)}\ar[r]^{\mu} &R[1]\ar[r] &(R\oplus R_0^{(\alpha)}/A) [1]\ar[r]&0\\
&&P^{\bullet(\beta)}\ar@{->>}[u]\ar@{.>}[ul]^{\lambda}}\]
where $\lambda$ is obtained by the projectivity of $P^{\bullet(\beta)}$.
The map $\lambda\oplus\mu:P^{\bullet(\beta)}\oplus P^{\bullet(\alpha)}\to R[1]$ is an epimorphism in $\H(\X,\Y)$, and hence $R[1]$ is generated by $P^\bullet$.
\end{proof}

\begin{remark}
The previous result suggests to consider a presentation $R_1\stackrel{f}{\to} R_0$ of $V$ with $\Omega=\Ker f$ \emph{as big as possible} to get a map $g:R^{(\alpha)}_1\to R$ with $\Coker g_{|\Omega^{(\alpha)}}$ belonging to $\overline{\overline{\Gen}} {V}$, going in the opposite direction with what we have observed in Remark~\ref{rem:piccolo}.
\end{remark}

Let $\mathcal V:=\{{V}/K: K\leq {V}\}$. For any right $R$-module $M$, consider
\[\Rej_{\mathcal V} M=\bigcap_{f:M\to W,\ W\in\mathcal V}\Ker f.\]
Clearly we have the following \emph{reject chain} of $M$:
\[M\geq \Rej_{\mathcal V} M\geq \Rej^2_{\mathcal V} M:=\Rej_{\mathcal V} (\Rej_{\mathcal V} M)\geq \Rej^3_{\mathcal V} M\geq \Rej^4_{\mathcal V} M\geq ...\]
Each factor $\Rej^i_{\mathcal V} M/\Rej^{i+1}_{\mathcal V} M$ belongs to $\overline\Gen {V}$, for 
\[\Rej^i_{\mathcal V} M/\Rej^{i+1}_{\mathcal V} M\hookrightarrow\prod_{W\in\mathcal V}W^{\Hom(\Rej^i_{\mathcal V} M, W)},\]\[ m+\Rej^{i+1}_{\mathcal V} M\mapsto (\varphi_W(m))_{\varphi_W\in \Hom(\Rej^i_{\mathcal V} M, W), W\in\mathcal V}\]
Therefore for any $i\in\mathbb N$, the module $M/\Rej^i_{\mathcal V} M$ belongs to $\overline{\overline\Gen}{V}$.

\begin{proposition}\label{prop:rejchain}
%Let $0\to \Omega\to R_1  \overset{f}\to R_0\to {V}\to 0$ be an exact sequence such that $P^\bullet:=R_1\stackrel f\to R_0$ is a projective object in $\H(\X,\Y)$.
Assume the chain 
\[R\geq \Rej_{\mathcal V} R\geq \Rej^2_{\mathcal V} R\geq \Rej^3_{\mathcal V} R\geq ...\]
is stationary, and $\Rej^n_{\mathcal V} R=\cap_{i\in\mathbb N}\Rej^{n+i}_{\mathcal V} R$ admits a projective cover $\xi:R_2\to \Rej^n_{\mathcal V} R$. Then $R_1\oplus R_2\stackrel{(f,0)}\to R_0$ is a projective generator of $\H(\X,\Y)$.
\end{proposition}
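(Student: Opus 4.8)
The plan is to show that $Q^\bullet:=\bigl(R_1\oplus R_2\stackrel{(f,0)}{\to} R_0\bigr)$ — which is just $P^\bullet\oplus R_2[1]$ as a direct sum of complexes — is a (small) projective generator of $\H(\X,\Y)$. First I would record that $Q^\bullet\in\H(\X,\Y)$: one has $H^0(Q^\bullet)=R_0/\Im f={V}\in\X$, while $H^{-1}(Q^\bullet)=\Omega\oplus R_2\in\Y$ because $\Y$ contains $R$, hence all projectives and all submodules of projectives, and is closed under finite direct sums; smallness (when $R_2$ is finitely generated, e.g.\ for $R$ right artinian) is then Lemma~\ref{lemma:S_is_small}. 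By Proposition~\ref{proposition:projectives_in_H} and the decomposition of $Q^\bullet$,
\[\Hom_{\Db(R)}(Q^\bullet,{V}[1])\;=\;\Hom_{\Db(R)}(P^\bullet,{V}[1])\;\oplus\;\Hom_R(R_2,{V}),\]
and the first summand vanishes by hypothesis; so the projectivity of $Q^\bullet$ reduces to proving $\Hom_R(R_2,{V})=0$.

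The engine of this is the identity $\Rej_{\mathcal V}R=\Ann_R{V}$. Every $W\in\mathcal V$ is a right $R_{V}$-module, so each homomorphism $R\to W$ factors through $R\tra R_{V}$; hence $\Rej_{\mathcal V}R$ is the preimage in $R$ of $\Rej^{R_{V}}_{\mathcal V}(R_{V})$. By (T3) for the tilting $R_{V}$-module ${V}$ there is a monomorphism $R_{V}\hookrightarrow V_0$ with $V_0$ a direct summand of a finite sum of copies of ${V}\in\mathcal V$, so $R_{V}$ is cogenerated by $\mathcal V$ and $\Rej^{R_{V}}_{\mathcal V}(R_{V})=0$; therefore $\Rej_{\mathcal V}R=\Ann_R{V}$. (If $n=0$ this gives $\Ann_R{V}=R$, hence ${V}=0$ and the claim is trivial, so assume $n\ge1$, so that $N:=\Rej^n_{\mathcal V}R\subseteq\Ann_R{V}$.) Running the same argument with a right ideal $L\subseteq\Ann_R{V}$ in place of $R$ shows any map $L\to W$, $W\in\mathcal V$, kills $L\cdot\Ann_R{V}$; combined with the fact that the $R_{V}$-subquotients occurring below $\Ann_R{V}$ are cogenerated by $\mathcal V$, this gives inductively $\Rej^i_{\mathcal V}R=(\Ann_R{V})^i$ for all $i\ge1$, so stationarity of the reject chain forces
\[N=(\Ann_R{V})^n=(\Ann_R{V})^{n+1}=N\cdot\Ann_R{V}.\]

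Granting $N=N\cdot\Ann_R{V}$, the projectivity is finished: if $\xi\colon R_2\tra N$ denotes the given projective cover, then $\xi\bigl(R_2\cdot\Ann_R{V}\bigr)=N\cdot\Ann_R{V}=N=\xi(R_2)$, hence $R_2=R_2\cdot\Ann_R{V}+\Ker\xi$, and as $\Ker\xi$ is superfluous, $R_2=R_2\cdot\Ann_R{V}$; consequently $\varphi(R_2)=\varphi(R_2)\cdot\Ann_R{V}\subseteq {V}\cdot\Ann_R{V}=0$ for every $\varphi\colon R_2\to {V}$, i.e.\ $\Hom_R(R_2,{V})=0$, so $Q^\bullet$ is projective. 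For the generator property one applies Proposition~\ref{prop:genbarbar} to the projective complex $Q^\bullet$, whose differential $(f,0)$ has kernel $\Omega\oplus R_2$: take $\alpha=1$ and $g:=(0,\iota\circ\xi)\colon R_1\oplus R_2\to R$, with $\iota\colon N\hookrightarrow R$ the inclusion. Then $g|_{\Omega\oplus R_2}$ has image $\iota\xi(R_2)=N=\Rej^n_{\mathcal V}R$, so its cokernel is $R/\Rej^n_{\mathcal V}R$, which belongs to $\overline{\overline{\Gen}}\,V$ by the remark preceding the statement. Proposition~\ref{prop:genbarbar} then says $Q^\bullet$ generates $\H(\X,\Y)$; together with smallness and projectivity, $\H(\X,\Y)\simeq\rMod\End(Q^\bullet)$ by \cite{Mit}.

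The main obstacle I anticipate is the fine control of the reject chain — upgrading bare stationarity to the equality $N\cdot\Ann_R{V}=N$ (equivalently $R_2\otimes_RR_{V}=0$). This relies on the $R_{V}$-modules $(\Ann_R{V})^i/(\Ann_R{V})^{i+1}$ being cogenerated by $\mathcal V$, and it is precisely here that the hypothesis ``${V}$ is a (faithful) tilting $R_{V}$-module'' is used in an essential way; Remark~\ref{rem:piccolo} already makes clear that without such control on $\Omega\oplus R_2$ the complex $Q^\bullet$ would fail to be projective.
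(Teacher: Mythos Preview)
Your global architecture is exactly the paper's: decompose $Q^\bullet=P^\bullet\oplus R_2[1]$, reduce projectivity via Proposition~\ref{proposition:projectives_in_H} to the vanishing $\Hom_R(R_2,V)=0$, and then invoke Proposition~\ref{prop:genbarbar} with $\alpha=1$ and $g=(0,\iota\xi)$, whose restriction to $\Omega\oplus R_2$ has cokernel $R/\Rej^n_{\mathcal V}R\in\overline{\overline{\Gen}}\,V$. That part is fine and matches the paper verbatim.

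The gap is precisely where you flag it. From $\Rej_{\mathcal V}R=\Ann_R V$ (which is correct, by (T3)) you only get $\Rej^i_{\mathcal V}R\supseteq(\Ann_R V)^i$; the reverse inclusion needs each $R_V$-module $(\Ann_R V)^{i-1}/(\Ann_R V)^i$ to be cogenerated by $\mathcal V$, and nothing in the tilting hypotheses forces an \emph{arbitrary} $R_V$-module to embed in a product of quotients of $V$. You can show that every simple $R_V$-module embeds in some $V/K$ (using that $E(S)\in\Gen V$ and that $S$ is essential), but extending a map $\bar nR_V\to V/K$ to all of $(\Ann_R V)^{i-1}/(\Ann_R V)^i$ is exactly the obstruction, and there is no injectivity available to do it. So the chain of equalities, and with it $N\cdot\Ann_R V=N$, is not established; the rest of your projectivity argument collapses.

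The paper bypasses this entirely with a two–line pushout argument that uses only the bare stationarity $\Rej_{\mathcal V}N=N$. Given $\zeta\colon R_2\to V$, form the pushout $Q$ of $\zeta$ and the epimorphism $\xi\colon R_2\twoheadrightarrow N$. Because $\xi$ is onto, $V\to Q$ is onto, so $Q\in\mathcal V$; hence the pushout map $\pi_2\colon N\to Q$ must vanish. Unravelling the pushout description, for each $\ell\in R_2$ one finds $m_\ell\in\Ker\zeta$ with $\xi(m_\ell)=\xi(\ell)$, so $R_2=\Ker\zeta+\Ker\xi$, and superfluity of $\Ker\xi$ gives $\zeta=0$. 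This uses neither $\Rej^i_{\mathcal V}R=(\Ann_R V)^i$ nor any cogeneration statement, and it is what you should substitute for your middle paragraph.
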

\begin{proof}
First let us prove that $\Hom_R(R_2,{V})=0$. Given $\zeta\in\Hom_R(R_2,{V})$, consider the following diagram with exact rows:
\[\xymatrix{
R_2\ar[r]^-\xi\ar[d]^\zeta&\Rej^n_{\mathcal V} R\ar[r]\ar[d]^{\pi_2}& 0\\
{V}\ar[r]&Q\ar[r]& 0
}\]
where $Q:={V}\oplus\Rej^n_{\mathcal V} R/<(\zeta(m),\xi(m)):m\in R_2>$ is the pushout of $\xi$ and $\zeta$. Since $\Rej^n_{\mathcal V} R=\Rej^{n+1}_{\mathcal V} R$,  $\pi_2=0$ and hence for each $\ell\in R_2$ it is $\pi_2(\xi(\ell))=0$, i.e. $(0, \xi(\ell))\in <(\zeta(m),\xi(m)):m\in R_2>$. Therefore, for each $\ell\in R_2$ there exists $m_\ell\in \Ker\zeta$ such that $\xi(m_\ell)=\xi(\ell)$. Thus we have $R_2=\Ker \zeta+\Ker\xi$; since $\Ker \xi$ is superfluous, we have $R_2=\Ker \zeta$ and hence $\zeta=0$.
Now since $\Hom_R(R_1\oplus R_2,{V}) =\Hom_R(R_1,{V})$ and $R_1\stackrel{f}\to R_0$ is a projective object in $\H(\X,\Y)$, by Proposition~\ref{proposition:projectives_in_H} also $R_1\oplus R_2\stackrel{(f,0)}\to R_0$ is a projective object in $\H(\X,\Y)$. The cokernel of $R_1\oplus R_2\stackrel{(0,\xi)}\to R$ is $R/\Rej^n_{\mathcal V} R$ which belongs to $\overline{\overline\Gen}{V}$. By Proposition~\ref{prop:genbarbar} we conclude that $R_1\oplus R_2\stackrel{(f,0)}\to R_0$ is also a generator of $\H(\X,\Y)$.
\end{proof}

\section{The main result}

We can now give our results, collecting what we have proved in the previous sections.

\begin{theorem}\label{thm:main}
Let $(\X,\Y)$ be a faithful torsion pair in $\rMod R$. The heart $\H(\X,\Y)$ is equivalent to a module category if and only 
\begin{enumerate}
\item $\X=\Gen V$ where $V$ is a tilting $R_{V}$-module ;
\item $V_R$ admits a presentation 
\[0\to \Omega\hookrightarrow R_1\stackrel f{\to} R_0\to V\to 0\]
with $R_1$ and $R_0$ finitely generated projective modules such that 
\begin{enumerate}
\item any map $R_1\to V$ extends to a map $R_0\to V$,
\item there exists a map $R_1^{(\alpha)}\stackrel g{\to} R$ such that the cokernel of the restriction $g_{|\Omega^{(\alpha)}}$ belongs to $\overline{\overline{\Gen}}V$.
\end{enumerate}
\end{enumerate}
In such a case, $R_1\to R_0$ is a small projective generator of the heart $\H(\X,\Y)$. Denoted by $S$ the endomorphism ring $\End_{\H(\X,\Y)}(R_1\to R_0)$, the heart $\H(\X,\Y)$ is equivalent to $\rMod S$.
\end{theorem}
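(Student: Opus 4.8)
The plan is to assemble Theorem~\ref{thm:main} directly from the machinery developed in the previous sections, treating it essentially as a bookkeeping exercise that glues together the necessary conditions (Section on Necessary conditions) with the sufficiency criteria (Propositions~\ref{proposition:projectives_in_H} and~\ref{prop:genbarbar}). Recall that by \cite[Lemma~3.1]{CGM} the heart $\H(\X,\Y)$ always admits arbitrary coproducts, so by Mitchell's theorem \cite[Theorem~3.1]{Mit} the heart is equivalent to a module category precisely when it possesses a small projective generator; moreover in that case the equivalence is given by $\Hom_{\H(\X,\Y)}(G,-)$ where $G$ is such a generator, and the representing ring is $S=\End_{\H(\X,\Y)}(G)$. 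So the whole statement reduces to: $\H(\X,\Y)$ has a small projective generator if and only if (1) and (2) hold, and in that case one may take $G=(R_1\to R_0)$.

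First I would prove necessity. Assume $\H(\X,\Y)\simeq\rMod S$. By Lemma~\ref{lemma:basic}(1) there is a finitely presented $V_R$ generating $\X$ which is a tilting $R_V$-module; this is (1). By Lemma~\ref{lemma:basic}(2) the image of $S$ under the equivalence is (isomorphic in $\H(\X,\Y)$ to) a complex $P^\bullet=(R_1\xrightarrow{f}R_0)$ with $R_i$ finitely generated projective and $\Coker f=V$; this gives the presentation in (2) (the exact row $0\to\Omega\hookrightarrow R_1\to R_0\to V\to 0$ is just the projective presentation of $V$ with $\Omega=\Ker f$). Since $S$ is projective in $\rMod S$, $P^\bullet$ is projective in $\H(\X,\Y)$, so Proposition~\ref{proposition:projectives_in_H} yields (2a). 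Since $S$ is a generator, $P^\bullet$ generates $\H(\X,\Y)$, in particular it generates $R[1]$, so by the ``only if'' direction of Proposition~\ref{prop:genbarbar} we get the map $g\colon R_1^{(\alpha)}\to R$ with $\Coker g_{|\Omega^{(\alpha)}}\in\overline{\overline{\Gen}}\,V$, which is (2b).

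For sufficiency, assume (1) and (2). Set $P^\bullet=(R_1\xrightarrow{f}R_0)$. Since $\Hom_R(R_1,V)$ factors through $R_0$ by (2a), $V=\Coker f\in\X$ and $\Omega=\Ker f\in\Y$ (as $\Y$ is closed under submodules and $R_1\in\Y$), so $P^\bullet\in\H(\X,\Y)$; Proposition~\ref{proposition:projectives_in_H} then says $P^\bullet$ is projective, Lemma~\ref{lemma:S_is_small} says it is small, and Proposition~\ref{prop:genbarbar} applied with the map $g$ from (2b) says it is a generator. Hence $P^\bullet$ is a small projective generator, so by Mitchell $\H(\X,\Y)\simeq\rMod S$ with $S=\End_{\H(\X,\Y)}(P^\bullet)$, and the final sentence of the theorem follows.

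I do not expect a genuine obstacle here, since every substantive implication has been isolated as a proposition earlier; the only point requiring a little care is the coherence of the two halves, namely checking that the presentation produced by Lemma~\ref{lemma:basic}(2) in the necessity argument is literally of the form required in (2) and that in the sufficiency argument the hypotheses (2a) and (2b) are exactly what Propositions~\ref{proposition:projectives_in_H} and~\ref{prop:genbarbar} consume. One should also note explicitly that $\Gen V=\Pres V$ is closed under extensions because $V$ is a tilting $R_V$-module together with Proposition~\ref{prop:quasiversustilting}(2), which is the hypothesis tacitly used when invoking Lemma~\ref{lemma:nuovo} inside Proposition~\ref{prop:genbarbar}; this is already built into those statements, so nothing new is needed. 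The mild subtlety worth a sentence is that in the necessity direction one must pass from ``$S$ generates'' to ``$P^\bullet$ generates $R[1]$'', but this is immediate since $R[1]\in\H(\X,\Y)$ is an object and a generator generates every object.
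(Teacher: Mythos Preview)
Your proposal is correct and follows exactly the approach of the paper, which simply cites Lemma~\ref{lemma:basic}, Lemma~\ref{lemma:S_is_small}, Proposition~\ref{proposition:projectives_in_H} and Proposition~\ref{prop:genbarbar}. Your write-up is just a more explicit unpacking of how these four results combine via Mitchell's theorem, and the coherence checks you flag (that $P^\bullet\in\H(\X,\Y)$, that Proposition~\ref{prop:genbarbar} presupposes projectivity, etc.) are all routine and handled by the standing assumptions of the section.
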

\begin{proof}
It is an easy consequence of Lemma~\ref{lemma:basic}, Lemma~\ref{lemma:S_is_small}, Proposition~\ref{proposition:projectives_in_H} and Proposition~\ref{prop:genbarbar}.
\end{proof}

If we concentrate our attention to artinian rings the above result assume the following aspect: 
\begin{corollary}\label{cor:artin}
Let $R$ be a right artinian ring and $(\X,\Y)$ be a faithful torsion pair in $\rMod R$. The heart $\H(\X,\Y)$ is equivalent to a module category if and only if 
 $\X=\Gen {V}$ where $V$ is a finitely presented $R$-module and a $R_{V}$-tilting module.
\end{corollary}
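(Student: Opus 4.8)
The plan is to deduce Corollary~\ref{cor:artin} from Theorem~\ref{thm:main} by showing that, when $R$ is right artinian, the technical hypothesis (2) of the theorem is automatic once (1) holds and $V$ is finitely presented. So I would fix a faithful torsion pair $(\X,\Y)$ with $\X=\Gen V$, $V$ finitely presented and tilting over $R_V$, choose any finitely generated projective presentation $R_1\stackrel f\to R_0\to V\to 0$, and then \emph{enlarge} $R_1$ using Proposition~\ref{prop:rejchain}: the point is that over a right artinian ring every module is, up to passing to the reject chain, built from $\overline{\overline{\Gen}}V$-pieces, and the chain must terminate.

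First I would record the easy direction: if $\H(\X,\Y)\simeq \rMod S$, then Lemma~\ref{lemma:basic}(1) gives that $\X=\Gen V$ for a finitely presented $V$ which is $R_V$-tilting, which is exactly the stated condition. For the converse, assume $\X=\Gen V$ with $V_R$ finitely presented and $R_V$-tilting. I would invoke Proposition~\ref{prop:rejchain}: it suffices to check that the reject chain
\[
R\geq \Rej_{\mathcal V}R\geq \Rej^2_{\mathcal V}R\geq\cdots
\]
is stationary and that its stable term admits a projective cover. The projective cover exists because $R$ is right artinian (hence semiperfect, so every finitely generated module — and every submodule of $R$, being finitely generated here since $R$ is right noetherian — has a projective cover). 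For stationarity: each quotient $\Rej^i_{\mathcal V}R/\Rej^{i+1}_{\mathcal V}R$ lies in $\overline{\Gen}V$, and one shows $\Rej_{\mathcal V}R\neq R$ unless $\Rej_{\mathcal V}R$ is already stable, using that $R$ has finite length as a right module over itself (artinian $\Rightarrow$ the descending chain of submodules of $R$ stabilizes). Concretely, a strictly descending chain of submodules of $R$ must be finite, so the reject chain is stationary. Then Proposition~\ref{prop:rejchain} produces a small projective generator $R_1\oplus R_2\stackrel{(f,0)}\to R_0$ of $\H(\X,\Y)$, and by Mitchell's theorem \cite{Mit} (applicable since $\H(\X,\Y)$ has arbitrary coproducts by \cite[Lemma~3.1]{CGM}) the heart is equivalent to $\rMod S$ with $S=\End_{\H(\X,\Y)}(R_1\oplus R_2\to R_0)$.

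I would also spell out why $V$ being finitely presented lets us start with \emph{finitely generated} projective $R_1,R_0$ at all (immediate from the definition of finitely presented), and why $\Gen V$ closed under products is needed in Lemma~\ref{lemma:basic} is not an issue here since we are going in the sufficiency direction and only use that $V$ is $R_V$-tilting, hence $\Gen V=\Pres V$ is closed under extensions (Proposition~\ref{prop:quasiversustilting}), which is what Lemma~\ref{lemma:sigmaM} and Lemma~\ref{lemma:nuovo} require.

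The main obstacle is verifying cleanly that the reject chain of $R_R$ stabilizes and that the needed projective cover exists: the stabilization is where right artinianness is genuinely used (it forces the descending chain of submodules of the finite-length module $R_R$ to be finite), and one must be slightly careful that $\mathcal V=\{V/K:K\leq V\}$ really is a set of finitely generated modules so that $\Rej_{\mathcal V}$ is well behaved — this again follows from $V$ finitely generated over the noetherian ring $R$. Once these are in place, everything else is a direct appeal to Proposition~\ref{prop:rejchain} and Theorem~\ref{thm:main}.
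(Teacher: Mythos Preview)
Your overall strategy matches the paper's: necessity from Lemma~\ref{lemma:basic}, sufficiency by showing the reject chain of $R_R$ is stationary (artinian) and its stable term has a projective cover (semiperfect), then applying Proposition~\ref{prop:rejchain}. But there is a genuine gap in your sufficiency argument. Proposition~\ref{prop:rejchain} is stated and proved under the standing hypothesis of that subsection, namely that the complex $P^\bullet=R_1\stackrel f\to R_0$ is \emph{already} a projective object of $\H(\X,\Y)$ (look at its proof: it uses ``$R_1\stackrel f\to R_0$ is a projective object in $\H(\X,\Y)$'' to conclude that $R_1\oplus R_2\to R_0$ is projective). You start from ``any finitely generated projective presentation'' of $V$ and never check this. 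In the language of Theorem~\ref{thm:main}, you have not verified condition~2(a), and by Remark~\ref{rem:piccolo} this amounts to $\Ker f\subseteq(\Ann_R V)^m$, which certainly fails for a generic presentation.

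The paper plugs this hole with Proposition~\ref{prop:projectivecover}: over a right artinian ring one can take a \emph{minimal} presentation, i.e.\ choose $R_0$ a projective cover of $V$ and then $R_1$ a projective cover of $\Im f=\Ker(R_0\to V)$ (both are finitely generated since $R$ is right noetherian and $V$ is finitely generated). Then Proposition~\ref{prop:projectivecover} guarantees $R_1\to R_0$ is projective in the heart, and only after that does Proposition~\ref{prop:rejchain} apply. So your argument becomes correct once you replace ``any presentation'' by the minimal one and insert the appeal to Proposition~\ref{prop:projectivecover}; this is exactly what the paper's one-line proof (``Theorem~\ref{thm:main}, Proposition~\ref{prop:projectivecover} and Proposition~\ref{prop:rejchain}'') is encoding.
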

\begin{proof}
It follows by Theorem~\ref{thm:main}, Proposition~\ref{prop:projectivecover} and Proposition~\ref{prop:rejchain}.
\end{proof}

For a faithful torsion pair $(\X, \Y)$ in $\rMod R$, the tilting and cotilting notions are strictly related, as the next corollary shows. 
 
\begin{corollary}
Let $(\X, \Y)$ be a faithful torsion pair in $\rMod R$ such that $\X$ is generated by a tilting module. Then $\Y$ is  cogenerated by a cotilting module.
\end{corollary}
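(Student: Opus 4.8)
The strategy is to exploit the symmetry between the tilting situation for $(\X,\Y)$ in $\rMod R$ and the cotilting situation one obtains in $\H(\X,\Y)$, transported back to $\rMod R$ via the heart equivalence. By hypothesis $\X = \Gen V$ with $V$ a tilting $R_V$-module, so by Theorem~\ref{thm:main} (or rather its applicable portion) we know $\H(\X,\Y)$ carries the tilting object $R[1]$, and we want to produce a cotilting module cogenerating $\Y$. The natural candidate is built from an injective cogenerator of the heart: since $\H(\X,\Y)$ is, under the running hypotheses, equivalent to a module category $\rMod S$ (here we use that $\X = \Gen V$ forces, via Lemma~\ref{lemma:basic} and the surrounding results, the heart to be a module category once one also has the presentation conditions — so one should state this corollary under the full hypothesis that $\H(\X,\Y)$ is a module category, consistent with the previous corollary's phrasing), it has an injective cogenerator $E$, and the object $W^\bullet$ cotilting in $\H(\X,\Y)$ dual to $R[1]$ should restrict, along the torsion-free class $\Y[1]$, to give the desired cotilting module in $\rMod R$.

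**Key steps, in order.** First, I would invoke the equivalence $\H(\X,\Y) \simeq \rMod S$ from Theorem~\ref{thm:main} and recall from Example~\ref{ex:cuore} that $(\Y[1],\X)$ is a torsion pair in $\H(\X,\Y)$ counter-equivalent to $(\X,\Y)$ in $\rMod R$, with the torsion-free class $\Y$ of $\rMod R$ matching $\Y[1]$. Second, I would dualize the argument of Example~\ref{ex:cuore}: just as $R[1]$ is tilting in $\H(\X,\Y)$ with $\Gen_{\H} R[1] = \Y[1]$, one wants a cotilting object $C^\bullet$ in $\H(\X,\Y)$ with $\Cogen_{\H} C^\bullet = \X$ (the ``other'' half of the torsion pair), so that, applying the counter-equivalence functors $H = \Hom_\H(R[1],-)$ and $H' = \Ext^1_\H(R[1],-)$ back to $\rMod R$, the module $H'(C^\bullet)$ or an injective-copresentation truncation thereof cogenerates $\Y$. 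Third, I would identify this $C^\bullet$ concretely: since $\H(\X,\Y)\simeq\rMod S$ and $S$ admits a (classical 1-)cotilting module exactly when... — actually the cleanest route is to observe that the stalk complex $E[0]$ for $E$ an injective cogenerator of $\rMod R$ lying in $\Y$ is an object of $\H(\X,\Y)$, and to verify directly that a suitable such object is cotilting in the heart; then transport. Fourth, unwind the functors $H, H', T, T'$ from Example~\ref{ex:cuore} to read off the resulting $R$-module and check the cotilting axioms (injective dimension $\le 1$, $\Ext^1$-vanishing, and the copresentation condition) using that they hold in the heart.

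**Main obstacle.** The delicate point will be constructing the cotilting object in $\H(\X,\Y)$ and controlling where it lives relative to the torsion pair $(\Y[1],\X)$: tilting objects in the heart (like $R[1]$) are pinned down by Definition~\ref{def:tilting}, but there is no dual ``cotilting object in an abelian category'' axiomatics invoked in the paper, so I would instead pass through the equivalence $\H(\X,\Y)\simeq\rMod S$ and work with an honest cotilting $S$-module $C_S$ — the existence of which, with $\Cogen C_S$ equal to the image of $\Y$, needs the observation that the tilted situation is ``reversible,'' i.e. that the torsion pair $(\Y[1],\X)$ in $\rMod S$ is again faithful-dually (cofaithful: it contains all injectives, because $R[1]$ is tilting hence $\Gen_\H R[1]=\Y[1]$ is a cogenerating torsion class in the sense of \cite{HRS}), whence by the Happel–Reiten–Smal\o\ / Colpi–Trlifaj correspondence $\X$ is cogenerated by a cotilting $S$-module. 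Transporting back via the counter-equivalence $H'$ (which sends $\X$ identically and $\Y[1]$ to $\Y$) then yields the cotilting $R$-module cogenerating $\Y$. Making the last sentence precise — that $H'$ carries a cotilting $S$-module cogenerating $\X$ to a cotilting $R$-module cogenerating $\Y$, including the $\injdim \le 1$ bookkeeping — is where the real work sits, but it is parallel to the tilting half already carried out in Lemma~\ref{lemma:basic}.
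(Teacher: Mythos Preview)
You have misread the hypothesis. The corollary assumes $\X=\Gen V$ for a genuine tilting $R$-module $V$ (Definition~\ref{def:tilting} in $\rMod R$, i.e.\ $\Gen V=V^\perp$), not merely a tilting $R_V$-module. This is decisive: a tilting $R$-module has $\pdim_R V\le 1$ and $\overline{\Gen}\,V=\rMod R$, so one may choose a presentation $0\to R_1\stackrel{f}{\to}R_0\to V\to 0$ with $\Omega=\Ker f=0$. Then condition (2)(a) of Theorem~\ref{thm:main} is exactly $\Ext^1_R(V,V)=0$, and (2)(b) reduces to $R\in\overline{\overline{\Gen}}\,V$, which holds since already $\overline{\Gen}\,V=\rMod R$. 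Thus Theorem~\ref{thm:main} applies with no extra hypotheses, contrary to your suggestion that the statement needs strengthening. The paper's proof is then one line: $\H(\X,\Y)$ is a module category, hence Grothendieck, and \cite{CGM} shows that the heart of a faithful torsion pair is Grothendieck only if $\Y$ is cogenerated by a cotilting module.

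Your plan, by contrast, is an attempt to re-derive that direction of the \cite{CGM} result by hand via the equivalence $\H(\X,\Y)\simeq\rMod S$. The underlying idea is salvageable: an injective cogenerator of $\H(\X,\Y)$ lies in $\Y[1]$ (since $\Y[1]=\Gen R[1]=(R[1])^\perp$ contains all injectives), so it has the form $W[1]$ for some $W\in\Y$, and one then checks $W$ is cotilting with $\Cogen W=\Y$. But your execution goes astray in two places. First, the suggestion to use $E[0]$ for $E$ an injective cogenerator of $\rMod R$ ``lying in $\Y$'' is unfounded: for a faithful torsion pair $\Y$ contains the projectives, not the injectives. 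Second, and more seriously, your final route seeks a cotilting $S$-module cogenerating $\X$ (the torsion-free class in $\H(\X,\Y)$) and then claims transport via $H'$ lands in $\Y$; this is backwards, since $H'=H^0$ carries $\X\subseteq\H(\X,\Y)$ to $\X\subseteq\rMod R$, not to $\Y$. The correct object sits in the torsion class $\Y[1]$ and is carried to $\Y$ by $H=H^{-1}$, not $H'$.
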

\begin{proof}
If $\X=\Gen V$ for a tilting module $V$, since the projective dimension of $V$ is at most one and ${\overline{\Gen}}V=\rMod R$, the assumptions of Theorem~\ref{thm:main} are satisfied. In particular  $\H(\X, \Y)$ is a Grothendieck category and so, by \cite{CGM}, $\Y$ is cogenerated by a cotilting module.
\end{proof}

\begin{remark}
In \cite{CGM} it is shown that the heart of a faithful torsion pair $(\X, \Y)$ is equivalent to a category of modules $\rMod S$ if and only if there exists a tilting complex $E^{\bullet}$ in $\D^b(R)$ such that the heart of the $t$-structure $\H_{E^{\bullet}}$ generated by $E^{\bullet}$ coincides with $\H(\X, \Y)$. In such a case  $E^{\bullet}$ is a small projective generator of $\H(\X, \Y)$ and $\End(E^{\bullet})\cong S$. Where to look for it, how to construct it, in terms of the torsion theory we started from, is not provided. 
Notice that such an $E^{\bullet}$ is quasi-isomorphic to a complex $R_1 {\to} R_0$ satisfying conditions of Theorem~\ref{thm:main}. 

Conversely, following \cite{HKM} (more precisely, applying Theorems~2.10 and 3.8, and Corollary 3.6), we get that a complex $R_1 {\to} R_0$ satisfying the conditions of Theorem~\ref{thm:main} is a tilting complex and the heart of the $t$-structure $\H_{R_1 {\to} R_0}$ generated by $R_1 {\to} R_0$ coincides with $\H(\X, \Y)$.
\end{remark}

\section{Applications}
 The following two examples show how strong  is the property that $\X$ has to be generated by a finitely presented   module for $\H(\X,\Y)$ to be equivalent to a module category.
 
\begin{example}\label{example:abgroups}
\begin{enumerate}
\item
In the category of abelian groups one can consider the torsion pair of torsion and torsion-free abelian groups, and that of divisible and reduced abelian groups.
The heart of both these torsion pairs are not equivalent to a module category. Indeed in both the cases the torsion class is not generated by a finitely generated abelian group.

\item Let $\Lambda$ be a Kronecker algebra.
% i.e. the path algebra of the quiver $\xymatrix{1\ar[r]<0.5ex>  \ar[r]<-0.5ex> & 2}$. 
The closure by direct limits of the class of preinjective modules is a torsion class $\X$; denoted by $\Y$ the corresponding torsion free class, the heart $\H(\X, \Y)$ is not equivalent to a module category.
Indeed, if $\X$ is generated by a finitely presented module ${V}$, then any preinjective module would be a quotient of a direct sum of  finite number of copies of ${V}$.  There is no finitely presented module ${V}$  with this property.
\end{enumerate}
\end{example}

In the following example we prove as for path algebras with relations, both in the artinian and not artinian case, our algorithm permits not only to decide if the heart of a faithful torsion pair is equivalent to a module category over a ring, but in the affirmative case also to construct explicitly the ring itself.

\begin{example}Denote by $k$ an algebraically closed field.
\begin{enumerate}
\item
Let $\Lambda$ be the path $k$-algebra given by the following quiver
\[\xymatrix@-1pc{
&2\ar[rd]^c\\
1\ar[ru]^a\ar[rd]_b&&4\ar[r]^e&5\ar[r]^f&6\\
&3\ar[ru]_d
}\]
with relations $ca=db=fec=fed=0$.
Let us consider the $\Lambda$-module $V=
%\begin{smallmatrix} &1\\ 2&& 3\end{smallmatrix}
\begin{smallmatrix} 1\\ 2\end{smallmatrix}\oplus
\begin{smallmatrix} 1\\ 3\end{smallmatrix}\oplus
\begin{smallmatrix} 1\end{smallmatrix}\oplus
\begin{smallmatrix} 4\end{smallmatrix}$. The subcategory
$\Gen V$ is a torsion class; let us denote by $\Y$ the corresponding torsion free class. The ring $\Lambda_{V}:=\Lambda/\Ann_\Lambda V$ is the path algebra associated with the quiver
\[\xymatrix@-1pc{
&2\\
1\ar[ru]^a\ar[rd]_b&&4\\
&3
}\]
It is easy to verify that the finitely presented $\Lambda$-module $V$ is a 
tilting $\Lambda_{V}$-module. Therefore, by Corollary~\ref{cor:artin},
$\H(\Gen V, \Y)$ is equivalent to a module category. Since $\Lambda$ is of finite representation type, $\H(\Gen V, \Y)$ is equivalent to a module category over an artin algebra of finite representation type $\Theta$ associated with a suitable quiver with relations. To determine it we have to find the indecomposable projective $\Theta$-modules.  Let us start constructing the small projective generator of  $\H(\Gen V, \Y)$. Consider the following resolution of $V$:
\[0\to \left(\begin{smallmatrix} 4\\ 5\end{smallmatrix}\right)^4\to
\left(\begin{smallmatrix} 2\\ 4\\ 5\end{smallmatrix}\right)^2\oplus
\left(\begin{smallmatrix} 3\\ 4\\ 5\end{smallmatrix}\right)^2\oplus
\begin{smallmatrix} 5\\ 6\end{smallmatrix}\to
\left(\begin{smallmatrix} &1\\ 2&& 3\end{smallmatrix}\right)^3\oplus
\begin{smallmatrix} 4\\ 5 \\ 6\end{smallmatrix}
\to \begin{smallmatrix} 1\\ 2\end{smallmatrix}\oplus
\begin{smallmatrix} 1\\ 3\end{smallmatrix}\oplus
\begin{smallmatrix} 1\end{smallmatrix}\oplus
\begin{smallmatrix} 4\end{smallmatrix}\to 0
\]
Since $\left(\begin{smallmatrix} 4\\ 5\end{smallmatrix}\right)^4$ is superfluous in $\left(\begin{smallmatrix} 2\\ 4\\ 5\end{smallmatrix}\right)^2\oplus
\left(\begin{smallmatrix} 3\\ 4\\ 5\end{smallmatrix}\right)^2\oplus
\begin{smallmatrix} 5\\ 6\end{smallmatrix}$, by Proposition~\ref{prop:projectivecover}, the complex with projective terms
\[\left(\begin{smallmatrix} 2\\ 4\\ 5\end{smallmatrix}\right)^2\oplus
\left(\begin{smallmatrix} 3\\ 4\\ 5\end{smallmatrix}\right)^2\oplus
\begin{smallmatrix} 5\\ 6\end{smallmatrix}\to
\left(\begin{smallmatrix} &1\\ 2&& 3\end{smallmatrix}\right)^3\oplus
\begin{smallmatrix} 4\\ 5 \\ 6\end{smallmatrix}
\]
is a projective object in the heart. 
Let us consider the reject chain of $\Lambda$ with respect to the family of all quotients of $V$:
\[\Lambda=\begin{smallmatrix} &1\\ 2&& 3\end{smallmatrix}\oplus
\begin{smallmatrix} 2\\ 4\\ 5\end{smallmatrix}\oplus
\begin{smallmatrix} 3\\ 4\\ 5\end{smallmatrix}\oplus
\begin{smallmatrix} 4\\ 5\\ 6\end{smallmatrix}\oplus
\begin{smallmatrix} 5\\ 6\end{smallmatrix}\oplus
\begin{smallmatrix} 6\end{smallmatrix}\geq 
\left(\begin{smallmatrix} 4\\ 5\end{smallmatrix}\right)^2\oplus
\left(\begin{smallmatrix} 5\\ 6\end{smallmatrix}\right)^2\oplus
\begin{smallmatrix} 6\end{smallmatrix}\geq 
\left(\begin{smallmatrix} 5\end{smallmatrix}\right)^2\oplus
\left(\begin{smallmatrix} 5\\ 6\end{smallmatrix}\right)^2\oplus
\begin{smallmatrix} 6\end{smallmatrix}=...
\]
The module $\left(\begin{smallmatrix} 5\\ 6\end{smallmatrix}\right)^4\oplus
\begin{smallmatrix} 6\end{smallmatrix}$ is the projective cover of the stationary term of the reject chain.
Therefore by Proposition~\ref{prop:rejchain} the complex
\[\left(\begin{smallmatrix} 2\\ 4\\ 5\end{smallmatrix}\right)^2\oplus
\left(\begin{smallmatrix} 3\\ 4\\ 5\end{smallmatrix}\right)^2\oplus
\begin{smallmatrix} 5\\ 6\end{smallmatrix}\oplus
\left(\begin{smallmatrix} 5\\ 6\end{smallmatrix}\right)^4\oplus
\begin{smallmatrix} 6\end{smallmatrix}
\to
\left(\begin{smallmatrix} &1\\ 2&& 3\end{smallmatrix}\right)^3\oplus
\begin{smallmatrix} 4\\ 5 \\ 6\end{smallmatrix}
\]
is a small projective generator of the heart. Let us decompose it as a direct sum of indecomposable projective complexes:
\[[\begin{smallmatrix} 2\\ 4\\ 5\end{smallmatrix}\to \begin{smallmatrix} &1\\ 2&& 3\end{smallmatrix}]\oplus
[\begin{smallmatrix} 2\\ 4\\ 5\end{smallmatrix}\oplus \begin{smallmatrix} 3\\ 4\\ 5\end{smallmatrix}\to \begin{smallmatrix} &1\\ 2&& 3\end{smallmatrix}]\oplus
[\begin{smallmatrix} 3\\ 4\\ 5\end{smallmatrix}\to \begin{smallmatrix} &1\\ 2&& 3\end{smallmatrix}]\oplus
[\begin{smallmatrix} 5\\ 6\end{smallmatrix}\to \begin{smallmatrix} 4\\ 5\\ 6\end{smallmatrix}]\oplus
[\begin{smallmatrix} 5\\ 6\end{smallmatrix}\to 0]^4\oplus
[\begin{smallmatrix} 6\end{smallmatrix}\to 0]
\]
Forgetting the redundant repetition we get that also 
\[[\begin{smallmatrix} 2\\ 4\\ 5\end{smallmatrix}\to \begin{smallmatrix} &1\\ 2&& 3\end{smallmatrix}]\oplus
[\begin{smallmatrix} 2\\ 4\\ 5\end{smallmatrix}\oplus \begin{smallmatrix} 3\\ 4\\ 5\end{smallmatrix}\to \begin{smallmatrix} &1\\ 2&& 3\end{smallmatrix}]\oplus
[\begin{smallmatrix} 3\\ 4\\ 5\end{smallmatrix}\to \begin{smallmatrix} &1\\ 2&& 3\end{smallmatrix}]\oplus
[\begin{smallmatrix} 5\\ 6\end{smallmatrix}\to \begin{smallmatrix} 4\\ 5\\ 6\end{smallmatrix}]\oplus
[\begin{smallmatrix} 5\\ 6\end{smallmatrix}\to 0]\oplus
[\begin{smallmatrix} 6\end{smallmatrix}\to 0]
\]
is a small projective generator. Studying the morphisms in the heart between these projective indecomposable complexes, it is easy to verify that $\Theta$ is the path algebra associated with the quiver
\[\xymatrix@-1pc{
&8\ar[rd]^i\\
7\ar[ru]^g\ar[rd]_h&11&10\ar[r]^m\ar[l]_n&12\\
&9\ar[ru]_\ell
}\]
with relations $mi=m\ell=0$.
\item
Let $\Lambda$ be the $k$-algebra given by the following quiver
\[\xymatrix@-1pc{
1\ar[r]&2\ar[r]&3\ar@(ur,dr)
}\]
Clearly, it is not an artinian algebra.
Consider the $\Lambda$-module $V=\begin{smallmatrix} 1\\ 2\end{smallmatrix}\oplus \begin{smallmatrix} 1\end{smallmatrix}$. The subcategory
$\Gen V$ is a torsion class; let us denote by $\Y$ the corresponding torsion free class. The ring $\Lambda_{V}:=\Lambda/\Ann_\Lambda V$ is the path algebra associated with the quiver $1\to 2$. It is easy to verify that the finitely presented $\Lambda$-module $V$ is a 
tilting $\Lambda_{V}$-module. Let us consider the following 
resolution of $V$:
\[
0
\to
\begin{smallmatrix} 3\\ 3\\ \vdots\end{smallmatrix}
\oplus
\begin{smallmatrix} 2 \\ 3\\ 3\\ \vdots\end{smallmatrix}
\to
\begin{smallmatrix} 1\\ 2 \\ 3\\ 3\\ \vdots\end{smallmatrix}
\oplus
\begin{smallmatrix} 1\\ 2 \\ 3\\ 3\\ \vdots\end{smallmatrix}
\to \begin{smallmatrix} 1\\ 2\end{smallmatrix}\oplus
\begin{smallmatrix} 1\end{smallmatrix}\to 0
\]
By Proposition~\ref{prop:projectivecover}, the complex with projective terms
\[\begin{smallmatrix} 3\\ 3\\ \vdots\end{smallmatrix}
\oplus
\begin{smallmatrix} 2 \\ 3\\ 3\\ \vdots\end{smallmatrix}
\to
\begin{smallmatrix} 1\\ 2 \\ 3\\ 3\\ \vdots\end{smallmatrix}
\oplus
\begin{smallmatrix} 1\\ 2 \\ 3\\ 3\\ \vdots\end{smallmatrix}\]
is a projective object in the heart. 
Let us consider the reject chain of $\Lambda$ with respect to the family of all quotients of $V$:
\[\Lambda=\begin{smallmatrix} 1\\ 2 \\ 3\\ 3\\ \vdots\end{smallmatrix}
\oplus
\begin{smallmatrix} 2 \\ 3\\ 3\\ \vdots\end{smallmatrix}
\oplus
\begin{smallmatrix} 3 \\ 3\\ 3\\ \vdots\end{smallmatrix}
\geq
\begin{smallmatrix} 3 \\ 3\\ 3\\ \vdots\end{smallmatrix}
\oplus
\begin{smallmatrix} 3 \\ 3\\ 3\\ \vdots\end{smallmatrix}
\oplus
\begin{smallmatrix} 3 \\ 3\\ 3\\ \vdots\end{smallmatrix}=...\]
The module $\begin{smallmatrix} 3 \\ 3\\ 3\\ \vdots\end{smallmatrix}
\oplus
\begin{smallmatrix} 3 \\ 3\\ 3\\ \vdots\end{smallmatrix}
\oplus
\begin{smallmatrix} 3 \\ 3\\ 3\\ \vdots\end{smallmatrix}$ is projective. Therefore by Proposition~\ref{prop:rejchain} the complex
\[\begin{smallmatrix} 3\\ 3\\ \vdots\end{smallmatrix}
\oplus
\begin{smallmatrix} 2 \\ 3\\ 3\\ \vdots\end{smallmatrix}
\oplus
\left(\begin{smallmatrix} 3\\ 3\\ \vdots\end{smallmatrix}\right)^3
\to
\begin{smallmatrix} 1\\ 2 \\ 3\\ 3\\ \vdots\end{smallmatrix}
\oplus
\begin{smallmatrix} 1\\ 2 \\ 3\\ 3\\ \vdots\end{smallmatrix}\]
is a small projective generator of the heart $\H(\Gen V, \Y)$; thus the latter is equivalent to a module category over a ring $\Theta$. 
Let us decompose our small projective generator as a direct sum of indecomposable projective complexes:
\[[\begin{smallmatrix} 3\\ 3\\ \vdots\end{smallmatrix}\to
\begin{smallmatrix} 1\\ 2 \\ 3\\ 3\\ \vdots\end{smallmatrix}]
\oplus
[\begin{smallmatrix} 2 \\ 3\\ 3\\ \vdots\end{smallmatrix}\to
\begin{smallmatrix} 1\\ 2 \\ 3\\ 3\\ \vdots\end{smallmatrix}]
\oplus
[\begin{smallmatrix} 3\\ 3\\ \vdots\end{smallmatrix}\to 0]^3
\]
Forgetting the redundant repetition we get that also 
\[[\begin{smallmatrix} 3\\ 3\\ \vdots\end{smallmatrix}\to
\begin{smallmatrix} 1\\ 2 \\ 3\\ 3\\ \vdots\end{smallmatrix}]
\oplus
[\begin{smallmatrix} 2 \\ 3\\ 3\\ \vdots\end{smallmatrix}\to
\begin{smallmatrix} 1\\ 2 \\ 3\\ 3\\ \vdots\end{smallmatrix}]
\oplus
[\begin{smallmatrix} 3\\ 3\\ \vdots\end{smallmatrix}\to 0]
\]
is a small projective generator. Studying the morphisms in the heart between these projective indecomposable complexes, it is easy to verify that $\Theta$ is the path algebra associated with the quiver
\[\xymatrix@-1pc{
4&5\ar[l]\ar[r]&6\ar@(ur,dr)
}\]
\end{enumerate}
\end{example}

An artin algebra $\Lambda$ is  \emph{quasi tilted} if there exists a faithful splitting torsion pair $(\X, \Y)$ such that any module in $\Y$ has projective dimension at most one (see for instance \cite{HRS}). In \cite{HR} it is  showed that any quasi tilted algebra $\Lambda$ of finite representation type is a tilted algebra, that is $\Lambda\cong \End_{\Gamma}(T)$, where $\Gamma$ is an hereditary algebra and $T$ a tilting $\Gamma$-module.  We get the same result applying Corollary~\ref{cor:artin}.

\begin{proposition}
If $\Lambda$ is a quasi tilted algebra of finite representation type, then $\Lambda$ is tilted.
\end{proposition}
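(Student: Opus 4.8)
The plan is to derive the statement from Corollary~\ref{cor:artin}. Since $\Lambda$ is quasi tilted, fix a faithful splitting torsion pair $(\X,\Y)$ in $\rMod\Lambda$ with $\pdim_\Lambda Y\le 1$ for every $Y\in\Y$; by \cite{HRS} the heart $\H(\X,\Y)$ is then a hereditary abelian category, and by Example~\ref{ex:cuore} it carries the tilting object $\Lambda[1]$ with $\End_{\H(\X,\Y)}(\Lambda[1])\cong\Lambda$. It therefore suffices to show that $\H(\X,\Y)$ is a module category: if $\H(\X,\Y)\simeq\rMod S$ then $\operatorname{gl.dim}S=\operatorname{gl.dim}\rMod S=\operatorname{gl.dim}\H(\X,\Y)\le 1$, so $S$ is hereditary, and transporting $\Lambda[1]$ along the equivalence produces a tilting $S$-module $T$ with $\End_S(T)\cong\Lambda$; hence $\Lambda$ is tilted.

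By Corollary~\ref{cor:artin} it is enough to exhibit a finitely presented $\Lambda$-module $V$ with $\X=\Gen V$ which is a $\Lambda_V$-tilting module. As $\Lambda$ is representation finite, $\rMod\Lambda$ has only finitely many indecomposables, every torsion class is functorially finite, and we take $V$ to be the basic Ext-projective generator of $\X$: then $\X=\Gen V$, $V$ is finitely generated — hence finitely presented — and $\Ext^1_\Lambda(V,\X)=0$, so in particular $\Ext^1_{\Lambda_V}(V,V)$ embeds in $\Ext^1_\Lambda(V,V)=0$. For the remaining tilting conditions over $\Lambda_V$ I would invoke Proposition~\ref{prop:quasiversustilting}(1): a representation-finite algebra is pure semisimple, whence $\Gen V$ is closed under products in $\rMod\Lambda$, so by that proposition it is enough to show that $V$ is a quasi-tilting $\Lambda$-module, i.e.\ that $\overline{\Gen}V\cap V^\perp\subseteq\Gen V$ (the reverse inclusion being clear). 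This is the delicate step: for a general torsion class the inclusion $\overline{\Gen}V\cap V^\perp\subseteq\Gen V$ fails, so one must really use the quasi tilted hypothesis — the splitting of $(\X,\Y)$, the bound $\pdim_\Lambda Y\le 1$, and the fact that $(\X,\Y)$ arises from the tilting object $\Lambda[1]$ of the hereditary heart $\H(\X,\Y)$ via the counter equivalence of Example~\ref{ex:cuore} (cf. \cite{CDT}, \cite{CGM}).

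Granting this, Corollary~\ref{cor:artin} applies — condition (2) of Theorem~\ref{thm:main} is automatic because $\Lambda$ is artinian (the reject chain of Proposition~\ref{prop:rejchain} stabilises and the projective covers of Propositions~\ref{prop:projectivecover} and \ref{prop:rejchain} exist), which is exactly how Corollary~\ref{cor:artin} is obtained — and it yields an equivalence $\H(\X,\Y)\simeq\rMod S$ with $S=\End_{\H(\X,\Y)}(R_1\to R_0)$, where $R_1\to R_0$ is a finitely generated projective presentation of $V$ and a small projective generator of $\H(\X,\Y)$. Together with the first paragraph this shows $\Lambda$ is tilted. The main obstacle is the second paragraph — proving that the Ext-projective generator of $\X$ is a quasi-tilting (equivalently, $\Lambda$ being pure semisimple, a $\Lambda_V$-tilting) module; the rest merely assembles the results of the earlier sections.
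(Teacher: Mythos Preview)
Your overall strategy coincides with the paper's: reduce to showing $\H(\X,\Y)$ is a module category via Corollary~\ref{cor:artin}, by producing a finitely presented $V$ with $\X=\Gen V$ that is $\Lambda_V$-tilting, and invoke Proposition~\ref{prop:quasiversustilting}(1) after checking $\Gen V$ is closed under products. However, you explicitly leave the decisive step unproved: you write ``Granting this'' for the inclusion $\overline{\Gen}V\cap V^\perp\subseteq\Gen V$ and call it ``the main obstacle''. That is a genuine gap --- nothing in your list of ingredients (the splitting of $(\X,\Y)$, the bound $\pdim Y\le 1$, the counter equivalence) is turned into an argument, and for an arbitrary Ext-projective generator of a torsion class this inclusion is \emph{not} formal.

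The paper closes exactly this gap, but not by taking the Ext-projective generator. It starts from $Q=\bigoplus_i X_i$, the sum of \emph{all} indecomposables in $\X$; for this module one has $\Gen Q=\Pres Q=\X$ trivially. It then removes summands one at a time: whenever $\Ext^1(X_j,X_i)\neq 0$, a non-split extension $0\to X_i\to M\to X_j\to 0$ together with the HRS fact that the valued quiver of a quasi tilted algebra has no oriented cycles forces $X_i\notin\add M$, so $M\in\add(Q\setminus X_i)$ still generates $X_i$; hence $\Gen$ and $\Pres$ are unchanged after deletion. Iterating yields $P$ with $\Ext^1(P,P)=0$ and $\Gen P=\Pres P=\X$. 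With $\Gen P=\Pres P$ in hand, the inclusion $\overline{\Gen}P\cap P^\perp\subseteq\Gen P$ follows from the $\overline{\Gen}$-embedding sequence $0\to M\to X_0\to X_1\to 0$ with $X_1\in\Add P$ (cf.\ Lemma~\ref{lemma:sigmaM}), which splits when $M\in P^\perp$. For the reverse inclusion $\Gen P\subseteq P^\perp$ the paper uses the \emph{dual} HRS bound --- $\injdim X\le 1$ for $X\in\X$, not the $\pdim$ bound on $\Y$ you mention --- to kill the relevant $\Ext^2$. Thus the two HRS inputs actually used are ``no oriented cycles'' and ``$\injdim\le 1$ on $\X$''; your proposal does not isolate either, and without the first you have no mechanism ensuring $\Gen V=\Pres V$, which is what the whole verification hinges on.
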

\begin{proof}
Let $(\X,\Y)$ be a faithful splitting torsion pair in $\rMod \Lambda$ such that any module in $\Y$ has projective dimension at most one. By \cite{HRS} the heart $\H(\X, \Y)$ is an hereditary abelian category, and $\Lambda[1]$ is a tilting object in $\H(\X, \Y)$ with $\End_{\H(\X, \Y)}\Lambda[1]\cong \Lambda$. Therefore to get the thesis it is sufficient to prove that $\H(\X, \Y)$ is equivalent to a category of modules.
By  Corollary~\ref{cor:artin}, $\H(\X, \Y)$ is equivalent to a category of modules if and only if the torsion class $\X$ is generated by a  finitely presented $\Lambda$-module $P$ which is a tilting $\Lambda_P$-module.  

Let us consider the module  $Q={{\oplus_i}^n} X_i$ where $\{X_1, \dots , X_n\}$ is a complete list of non-isomorphic indecomposable modules in $\X$.  Since the algebra is of finite representation type, the torsion class $\X$ coincides with $\Add Q$; moreover, being $Q$ product complete, $\Add Q=\Prod Q$ is closed under products.

%In order to show the second property, let us prove that $\Ext^1(P,P)=0$. 
If  $\Ext^1(Q,Q)= 0$, we take $P:=Q$. Otherwise, if  $\Ext^1(Q,Q)\neq 0$,   let us denote by $X_i$ and $X_j$ two indecomposable summands of $Q$ such that $\Ext^1(X_j, X_i)\neq 0$. We claim that $\X=\Gen Q_1=\Pres Q_1$, where $Q_1=Q\setminus \{X_i\}$. Indeed, let  $0\to X_j\to M \to X_i\to 0$ be a non splitting exact sequence. Since in the valued quiver of $\Lambda$ there are no oriented cycles (see \cite{HRS}), we deduce that $X_i$ does not belong to $\add M$ and therefore $M$ belongs to $\add Q_1$. Since $M$ generates $X_i$, it is $\Gen Q=\Gen Q_1$ and $\Pres Q=\Pres Q_1$. If
$\Ext^1(Q_1, Q_1)=0$ we take $P:=Q_1$, otherwise we repeat the same procedure. In such a way,  in a finite number of steps, we will get a module $P:=Q_m$ with $\X=\Gen P=\Pres P$ and $\Ext^1(P,P)=0$.

The module $P$ is a finitely presented $\Lambda$-module; let us prove that it is a tilting $\Lambda_P$-module.  By Proposition~\ref{prop:quasiversustilting}, since $\X$ is closed under products, it is sufficient to prove that
$\Gen P=\overline\Gen P\cap P^\perp$.
If $M\in \overline\Gen P$, there exists an exact sequence $0\to M\to X_0 \to X_1\to 0$ with $X_0\in  \X$ and $\X_1\in \Add P$; if $M$ belongs also to $P^\perp$ this sequence splits and so $M$ belongs to $\Gen P$.  Conversely, if  $M$ belongs to  $\Gen P=\Pres P$, there exists en exact sequence $0\to M_0\to P_0\to M\to 0$, where $P_0\in \Add P=\Prod P$ and $M_0\in \Gen P$. Thus from the sequence $\Ext^1(P, P_0)\to \Ext^1(P, M)\to \Ext^2(P, M_0)$, since any module in $\X$ has injective dimension at most one (\cite{HRS}), we conclude that $M\in P^{\perp}$.
\end{proof}

\end{document}